\newtheorem{theorem}{Theorem}[section]
\newtheorem{lemma}[theorem]{Lemma}
\newtheorem{corollary}[theorem]{Corollary}
\newtheorem{proposition}[theorem]{Proposition}
\newtheorem{example}[theorem]{Example}
\newtheorem{tabl}[theorem]{Table}
\renewcommand{\geq}{\geqslant}
\renewcommand{\leq}{\leqslant}
\renewcommand{\ge}{\geqslant}
\renewcommand{\le}{\leqslant}
\def\Z{\mathbb{Z}}
\def\cref#1{Corollary~$\ref{#1}$}
\begin{document}


\title{Square Integer Heffter Arrays with Empty Cells}

\author{
 D.S. Archdeacon \\
      Dept. of Math. and Stat. \\
      University of Vermont \\
      Burlington, VT 05405 \ \ USA\\
      {\tt dan.archdeacon@uvm.edu} \\ \and
J.H.~Dinitz\\
Dept. of Math. and Stat.\\
University of Vermont\\
 Burlington, VT 05405 \ \ USA\\
{\tt jeff.dinitz@uvm.edu}\\  \\ \and
D.M. Donovan\\
Department of Mathematics\\
The University of Queensland\\
Brisbane, QLD Australia\\
{\tt dmd@maths.uq.edu.au}\\
\and
Emine \c{S}ule Yaz{\i}c{\i}\\
Department of Mathematics,\\ Ko\c{c} University, Sar{\i}yer,\\
34450, \.{I}stanbul, Turkey\\
{\tt eyazici@ku.edu.tr}\\
}

\maketitle

\begin{center}
{\em Dedicated to the memory of our friend and colleague Scott Vanstone.}\\
\vspace*{1.0ex}
\end{center}
\begin{abstract} A Heffter array $H(m,n;s,t)$ is an $m \times n$ matrix with nonzero entries from $\mathbb{Z}_{2ms+1}$ such that {\em i}) each row contains $s$ filled cells and each column contains $t$ filled cells, {\em ii}) every row and column sum to 0, and {\em iii}) no element from $\{x,-x\}$ appears twice. Heffter arrays are useful in embedding  the complete graph $K_{2nm+1}$ on an orientable surface where the embedding has the property that each edge borders exactly one  $s-$cycle and one $t-$cycle.  Archdeacon, Boothby and Dinitz proved that these arrays can be constructed in the case when $s=m$, i.e every cell is filled.  In this paper we concentrate on  square arrays with empty cells  where every row sum and every column sum is $0$ in $\Z$.  We solve most of the instances of this case.

\end{abstract}


\noindent
\medskip{\bf Mathematics Subject Classification} 05B30, 05C10

\section{Introduction and some examples}

A Heffter array $H(m,n;s,t)$ is an $m \times n$ matrix with nonzero entries from $\mathbb{Z}_{2ms+1}$ such that 
\begin{enumerate}
  \item 
each row contains $s$ filled cells and each column contains $t$ filled cells,
\item the elements in every row and column sum to 0 in $\mathbb{Z}_{2ms+1}$, and 
\item  for every $x \in \mathbb{Z}_{2ms+1} \setminus \{0\}$, either $x$ or $-x$ appears in the array.
\end{enumerate}

The notion of a Heffter array $H(m,n;s,t)$ was first defined by Archdeacon in \cite{heffter1}.  It is shown there that a Heffter array with a pair of special orderings can be used to construct an embedding of the complete graph $K_{2ms+1}$ on a surface. The connection is given in the following theorem.

\begin{theorem}\label{heffter1}{\rm \cite{heffter1}} Given a Heffter array $H(m,n;s,t)$ with compatible orderings $\omega_r$ of the symbols in the rows of the array and $\omega_c$ on the symbols in the columns of the array, then there exists an embedding of $K_{2ms+1}$ such that every edge is on a face of size $s$ and a face of size $t$. Moreover, if $\omega_r$ and $\omega_c$ are both simple, then all faces are simple cycles. \end{theorem}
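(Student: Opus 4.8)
The plan is to produce an explicit $\Z_{n}$-symmetric rotation system for $K_{n}$, where $n = 2ms+1$, directly from the array and its two orderings, and then read the face sizes off that rotation. First I would take the vertex set of $K_n$ to be $\Z_n$ and group the edges by difference: the nonzero elements of $\Z_n$ fall into the $ms$ pairs $\{x,-x\}$, and each pair indexes a \emph{difference class}, the set of $n$ edges $\{v,v+x\}$ with $v\in\Z_n$, on which $\Z_n$ acts regularly by translation. Condition (iii), together with the fact that the array has exactly $ms$ filled cells, shows that the entries are in bijection with the $ms$ difference classes; hence every edge of $K_n$ is accounted for exactly once by some entry and some translate.

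Next I would convert the rows and columns into families of closed walks by partial sums. Reading a fixed row in the cyclic order prescribed by $\omega_r$ as $a_1,\dots,a_s$, condition (ii) gives $a_1+\cdots+a_s=0$, so for each base vertex $v$ the sequence $v\to v+a_1\to v+a_1+a_2\to\cdots\to v$ is a closed walk of length $s$. Its $j$-th step uses an edge of the difference class of $a_j$, and as $v$ ranges over $\Z_n$ these steps sweep that class exactly once. Because the entries hit every difference class exactly once, the totality of row-walks partitions $E(K_n)$ into $s$-gons; the column-walks built from $\omega_c$ likewise partition $E(K_n)$ into $t$-gons. Each edge thus lies on exactly one $s$-gon and exactly one $t$-gon.

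The core of the argument is to realize these two edge-partitions simultaneously as the face set of one orientable embedding. Since the whole configuration is translation-invariant, it suffices to specify a single cyclic ordering of the $2ms$ darts leaving the vertex $0$ and extend it by the $\Z_n$ action; the resulting rotation system is orientable by construction, and its faces are the orbits of the usual face-tracing permutation. I would define this cyclic ordering so that consecutive darts form exactly the corners dictated by the row-walks and the column-walks, and then verify that the faces so traced are precisely the $s$-gons and $t$-gons found above. This is where the hypothesis of \emph{compatible} orderings enters: compatibility is exactly the combinatorial condition ensuring that the corner data coming from $\omega_r$ and from $\omega_c$ can be merged into one well-defined cyclic order at $0$ in which each dart is used once by a row-corner and once by a column-corner. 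I expect this merging-and-consistency check to be the main obstacle; the partial-sum bookkeeping before it is routine, and once the rotation is in place Euler's formula even pins down the genus of the surface.

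Finally, for the ``moreover'' clause I would use that a \emph{simple} ordering is one whose partial sums $0,a_1,a_1+a_2,\dots$ are pairwise distinct in $\Z_n$. Under a simple ordering the corresponding closed walk visits $s$ (respectively $t$) distinct vertices, hence is an embedded cycle rather than a walk repeating a vertex. So if both $\omega_r$ and $\omega_c$ are simple, every face of the constructed embedding is a simple cycle, which completes the proof.
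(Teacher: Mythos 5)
You should first be aware that this paper does not prove Theorem~1.1 at all: it is quoted verbatim from the cited reference \cite{heffter1} (Archdeacon's paper on Heffter arrays and biembeddings), and the authors explicitly decline to discuss orderings, referring the reader there even for the definitions of ``compatible'' and ``simple.'' So there is no in-paper proof to compare against; I can only assess your sketch against the standard argument in that source. Your overall architecture is the right one and matches it: take $V(K_{2ms+1})=\Z_{2ms+1}$, observe that the $ms$ entries biject with the difference classes $\{x,-x\}$, turn each row and column into a $\Z_{2ms+1}$-orbit of closed walks via partial sums (this is where the zero-sum condition is used, and your edge count $msn=\binom{n}{2}$ is correct), and realize the two resulting edge-decompositions as the two face classes of a translation-invariant rotation system. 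The ``moreover'' clause is also handled correctly: simplicity of an ordering is precisely distinctness of its partial sums, which makes the corresponding closed walks simple cycles.

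The genuine gap is exactly the step you flag as ``the main obstacle'' and then do not carry out. You never construct the cyclic order of the $2ms$ darts at the vertex $0$, never verify that the face-tracing permutation of the resulting rotation system has as its orbits precisely the row-walks and column-walks (rather than some longer walks that splice several of them together), and --- most seriously --- you never use the actual definition of \emph{compatible}. Saying that ``compatibility is exactly the combinatorial condition ensuring that the corner data can be merged'' is circular: you are defining the hypothesis to be whatever makes your construction succeed. In the source, compatibility is a concrete condition (the composition of the row and column orderings, viewed as permutations of the filled cells, is a single cycle of length $ms$), and it is exactly what guarantees that the interleaved rotation at $0$ is one well-defined cyclic permutation of all $2ms$ darts and that tracing faces alternately via row-successors and column-successors closes up into the intended $s$-gons and $t$-gons. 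Without stating that definition and performing the face-trace, the proof is an outline of the correct strategy rather than a proof.
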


We will not concern ourselves with the ordering problem in this paper and will just concentrate on the construction of the Heffter arrays. (In a subsequent paper (see \cite{heffter4}) we will address the ordering problem in more detail.)  
We refer the reader to \cite{heffter1} for the definition of a simple ordering and the definition of compatible orderings.


 Theorem \ref{heffter1} can be stated in design theoretic terms.  A $t-$cycle system on $n$ points is a decomposition of the edges of $K_n$ into $t-$cycles.  A $t-$cycle system $C$ on  $K_n$ is {\em cyclic} if there is a labeling of the vertex set of $K_n$ with the elements of $\Z_n$ such that the permutation  $x\rightarrow x+1$ preserves the cycles of $C$.
A biembedding of an $s-$cycle system  and a $t-$cycle system is a face 2-colorable topological
embedding of the complete graphs $K_{2ms+1}$ in which one color class is comprised of the cycles in the $s-$cycle system and the other class contains the cycles in the $t-$cycle system.
We refer the reader to \cite{handbook-cycle} for general information on $t-$cycle systems, to \cite{BD,V} for cyclic $t-$cycle systems and to \cite{GG} for information about biembedings of cycle systems. While not using Heffter arrays, \cite{DGLM} and \cite{DGLM2} gives a related idea of embedding a single 2-fold triple system.
The following proposition about cycle systems follows from Theorem \ref{heffter1}.

\begin{proposition}\label{heffter-cycle} Assume there exists a  Heffter array $H(m,n;s,t)$ with compatible orderings $\omega_r$ of the symbols in the rows of the array and $\omega_c$ on the symbols in the columns of the array such that $\omega_r$ and $\omega_c$ are both simple.  Then there exists  a biembedding of a cyclic $s-$cycle system $S$ and a  cyclic $t-$cycle system $T$ both  on $2ms+1$ points.  
\end{proposition}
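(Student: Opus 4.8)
The plan is to extract the claimed cycle-system biembedding directly from the topological embedding produced by Theorem~\ref{heffter1}, and then to recognize its two face classes as \emph{cyclic} cycle systems by recalling how that embedding is built from the array.

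First I would apply Theorem~\ref{heffter1} to the given array $H(m,n;s,t)$ together with the compatible simple orderings $\omega_r,\omega_c$. This yields an embedding of $K_{2ms+1}$ in which every edge lies on one face of size $s$ and one face of size $t$, and, because $\omega_r$ and $\omega_c$ are simple, every face is a simple cycle. I would then observe that ``one $s$-face and one $t$-face per edge'' supplies a face $2$-coloring for free: color each face of length $s$ with one color and each face of length $t$ with the other. Any two faces that share an edge are an $s$-face and a $t$-face, hence receive distinct colors, so the embedding is face $2$-colorable.

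Next I would read off the two cycle systems. Since each edge lies on exactly one face of length $s$, the collection $S$ of all $s$-faces covers every edge of $K_{2ms+1}$ exactly once; as each such face is a simple $s$-cycle, $S$ is a decomposition of the edge set of $K_{2ms+1}$ into $s$-cycles, that is, an $s$-cycle system on $2ms+1$ points. Symmetrically, the collection $T$ of all $t$-faces is a $t$-cycle system on the same points. The $2$-coloring above then exhibits the embedding as a biembedding of $S$ and $T$, matching the definition given just before the proposition.

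The remaining---and genuinely new---ingredient is cyclicity, and here I would unpack the construction behind Theorem~\ref{heffter1} rather than use it as a black box. Identifying the vertex set of $K_{2ms+1}$ with $\Z_{2ms+1}$, each row of the array, ordered by $\omega_r$, produces a base cycle whose consecutive vertices are the partial sums $0, a_1, a_1+a_2, \ldots, a_1+\cdots+a_{s-1}$ of its entries; this closes up into an $s$-cycle precisely because the row sums to $0$. The whole system $S$ is then the union of the $\Z_{2ms+1}$-orbits of the $m$ base cycles under the translation $x \mapsto x+1$, so this translation permutes the cycles of $S$ and $S$ is cyclic; the same argument applied to the columns, ordered by $\omega_c$, makes $T$ cyclic. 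I expect this last step to be the main obstacle, since cyclicity is invisible from the statement of Theorem~\ref{heffter1} and must be recovered from the difference-based construction of \cite{heffter1}. A final check---that the orbits are disjoint and that $S$ covers $s\cdot m(2ms+1)=\binom{2ms+1}{2}=t\cdot n(2ms+1)$ edges, using $ms=nt$---confirms that $S$ and $T$ are genuine cycle systems.
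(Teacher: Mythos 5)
Your proposal is correct and follows the same route as the paper, which offers no written proof beyond the assertion that the proposition ``follows from Theorem~\ref{heffter1}''; you have supplied exactly the details that assertion suppresses, namely the face $2$-coloring, the edge count, and the recovery of cyclicity from the partial-sum/translation construction underlying \cite{heffter1}. The only small caveat is that when $s=t$ the two face classes must be distinguished as row-faces versus column-faces rather than by face length, but that distinction is already built into the construction you invoke.
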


A Heffter array is called an {\em integer} Heffter array if Condition 2 in the definition of Heffter array above is strengthened so that the elements in every row and every column sum to zero in $\Z$.  In this paper we will concentrate on constructing {\em square integer} Heffter arrays. If the Heffter array is square, then $m=n$ and necessarily $s=t$. So for the remainder of this paper define a Heffter array $H(n;k)$ to be an $n \times n$ array of integers  satisfying the following:

\begin{enumerate}
  \item  each row  and each column contains $k$ filled cells,
  \item the symbols in every row and every column sum to 0 in $\Z$, and
  \item for every element $x\in\{1,2, \ldots ,nk\}$ either $x$ or $-x$ appears in the array.
\end{enumerate}

In  \cite{heffter2} the authors study the case where the Heffter array has no empty cells.  In that paper it is shown that there is an {\em integer} $H(m,n;n,m)$ if and only if $m,n \geq 3$ and $mn \equiv 0,3$ (mod 4) and in general that there is an  $H(m,n;n,m)$ for all $m,n \geq 3$.   A {\em shiftable} Heffter array $H_s(m,n;s,t)$ is defined to be a Heffter array $H(m,n;s,t)$ where every row and every column contain the same number of positive and negative numbers.   From  \cite{heffter2} we have that there is an  $H_s(m,n;n,m)$ if and only if $m$ and $n$ are even. The notation $H_s(n;2k)$ denotes a {\em shiftable} $H(n;2k)$.

In this paper we extend the idea of shiftable to any array.  An $ n \times n$ array  $A$ of integers (possibly with empty cells) is {\em shiftable} if each row and each column contains the same number of positive and negative numbers.   Let  $A$ be a shiftable array and $x$ a nonnegative integer. If $x$ is added to each  positive element and $-x$ is added to each negative element, then all of the row and column sums remain unchanged. Let $A \pm x$ denote the array  where $x$ is added to all the {\em positive} entries in $A$ and $-x$ is added to all the {\em negative} entries.  

If $A$ is an integer array, define the {\em support} of $A$ as the set containing the absolute value of the elements contained in $A$.  So if $A$ is shiftable with support $S$ and $x$ a nonnegative integer, then $A\pm x$ has the same row and column sums as $A$ and has support $S + x$.   In the case of a shiftable Heffter array $H_s(n;k)$, the array 
$H_s(n;k)\pm x$ will have row and column sums equal to zero and support $S= \{1+x,2+x, \ldots ,nk+x\} $.

The following lemma gives necessary conditions for the existence of  $H(n;k)$ and  $H_s(n;k)$.

\begin {lemma}\label{necessary}  If there exists an $H(n;k)$, then necessarily $nk \equiv 0,3$ (mod 4).  Furthermore,
if there exists an $H_s(n;k)$, then necessarily $k$ is even and $nk \equiv 0$ (mod 4).  \end{lemma}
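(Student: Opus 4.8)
The plan is to exploit the fact that the entries of an $H(n;k)$ are, up to sign, exactly the integers $1,2,\ldots,nk$, together with the observation that the grand total of all entries must vanish. First I would count the filled cells: each of the $n$ rows has $k$ filled cells, so the array has exactly $nk$ filled cells. By Condition 3 every value in $\{1,2,\ldots,nk\}$ is represented (as $x$ or $-x$), and since there are exactly $nk$ cells for $nk$ values, the pigeonhole principle forces each value to appear exactly once. Hence the multiset of entries is precisely $\{\epsilon_1\cdot 1,\epsilon_2\cdot 2,\ldots,\epsilon_{nk}\cdot nk\}$ for some choice of signs $\epsilon_i\in\{+1,-1\}$.

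Next, since every row sums to $0$ in $\Z$, the sum of all entries is $0$, giving $\sum_{i=1}^{nk}\epsilon_i\, i=0$. Reducing this identity modulo $2$ and using $\epsilon_i\, i\equiv i\pmod 2$, I obtain $\sum_{i=1}^{nk} i=\tfrac{nk(nk+1)}{2}\equiv 0\pmod 2$, which is equivalent to $nk(nk+1)\equiv 0\pmod 4$. A short case analysis on the residue of $nk$ modulo $4$ (checking $nk\equiv 0,1,2,3$) then shows that this congruence holds precisely when $nk\equiv 0$ or $3\pmod 4$, which is the first claim.

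For the shiftable case the key extra input is the defining property that each row contains equally many positive and negative entries; since a row has $k$ filled cells, this immediately forces $k$ to be even. Consequently $nk$ is even, so $nk\not\equiv 3\pmod 4$, and combining this with the first part (which gives $nk\equiv 0,3\pmod 4$) leaves only the possibility $nk\equiv 0\pmod 4$. The whole argument is essentially a parity computation, so I do not anticipate a serious obstacle; the only point requiring genuine care is the cell-count/pigeonhole step justifying that each absolute value occurs exactly once, since everything else follows mechanically from reducing the zero-sum identity modulo $4$.
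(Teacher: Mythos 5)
Your proof is correct and follows essentially the same route as the paper: both arguments reduce to the observation that the zero row sums force an even number of odd entries overall, so the parity of $1+2+\cdots+nk$ must be even, which happens exactly when $nk\equiv 0,3\pmod 4$; the shiftable case is handled identically. Your explicit pigeonhole justification that each absolute value occurs exactly once is a point the paper leaves implicit, but it is a harmless addition rather than a different method.
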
  

\begin{proof}  Given an $H(n;k)$, in order for each row to sum to zero, each row must contain an even number of odd numbers.  Hence the entire array contains an even number of odd numbers.   Now, the support of  the $H(n;k)$ is the  set $S =\{1,2,  \ldots ,nk\}$.  There will be an even number of odd numbers in $S$ exactly when $nk \equiv 0,3$ (mod 4).

If there is an $H_s(n;k)$, then clearly $k$ must be even in order to have the same number of positive and negative entries in each row.  It follows that $nk \not\equiv 3$ (mod 4), hence $nk \equiv 0$ (mod 4). 
\end{proof}

We give some examples of Heffter arrays $H(n;k)$ and $H_s(n;k)$. 

\begin{example} \label{ex1} The following are $H(4;3), H_s(5;4), H_s(6;4)$, $H_s(7;4)$ and $H_s(8;6)$ respectively.
\end{example}

\renewcommand{\tabcolsep}{3pt}

\begin{center}
\begin{tabular}{|c|c|c|c|}  \hline
4& 8 & & -12\\ \hline
-9 & 3 & 6 &  \\ \hline
 & -11 & 1 & 10 \\ \hline
5 &  & -7 & 2 \\ \hline
\multicolumn{4}{c}{$H(4;3)$}\\
\end{tabular} \hspace{.3in}
\begin{tabular}{|c|c|c|c|c|} \hline
 & 17 & -8 & -14&5  \\ \hline
1& & 18& -9 &-10\\ \hline
-6 & 2 &  &19 &-15\\ \hline
-11&-12&3&&20 \\ \hline
16&-7&-13&4& \\ \hline
\multicolumn{5}{c}{$H_s(5;4)$}\\
\end{tabular} \hspace{.3in}
\begin{tabular}{|c|c|c|c|c|c|} \hline
&&13&-15&-9&11\\ \hline
&&-14&16&10&-12 \\ \hline
-1&3&&&17&-19\\  \hline
2&-4&&&-18&20 \\ \hline
21&-23&-5&7&& \\ \hline
-22&24&6&-8&& \\
\hline
\multicolumn{6}{c}{$H_s(6;4)$}\\
\end{tabular} 

\bigskip
\begin{tabular}{|c|c|c|c|c|c|c|} \hline
1&&&&26&-13&-14 \\ \hline
-8&2&&&&27&-21 \\ \hline
-15&-16&3&&&&28 \\ \hline
22&-9&-17&4&&& \\ \hline
&23&-10&-18&5&& \\ \hline
&&24&-11&-19&6& \\ \hline
&&&25&-12&-20&7 \\ \hline
\multicolumn{7}{c}{$H_s(7;4)$}\\
\end{tabular}\hspace{.6in}
\begin{tabular}{|c|c|c|c|c|c|c|c|} \hline
-1&5&2&-7&-9&10&&  \\  \hline
3&-4&-6&8&11&-12&&    \\ \hline
&&-13&17&14&-19&-21&22    \\ \hline
&&15&-16&-18&20&23&-24    \\ \hline
-33&34&&&-25&29&26&-31    \\ \hline
35&-36&&&27&-28&-30&32    \\ \hline
38&-43&-45&46&&&-37&41    \\ \hline
-42&44&47&-48&&&39&-40    \\ \hline
\multicolumn{8}{c}{$H_s(8;6)$}\\
\end{tabular}  

\end{center}
\renewcommand{\tabcolsep}{6pt}

In this paper we will prove the existence of Heffter arrays $H(n;k)$ for many of the possible values of $n$ and $k$.    The paper is organized as follows.  In Section \ref{section2} we cover the case when $k$ is even and in Section \ref{section3} we deal with the case when  $k \equiv 3$ (mod 4).  In both of these cases we prove that the necessary conditions from   Lemma \ref{necessary} are sufficient.  In 
Section \ref{section4} we consider the case when  $k \equiv 1$ (mod 4).  In this case we do not get a complete solution, however we construct $H(n;k)$ for many values of $n$ and $k$.  The results are summarized in Section \ref{section5}.

\section{An even number of filled cells per row and column} \label{section2}

We give two direct constructions which both yield shiftable Heffter arrays with an even number $k$ of cells per row and column. From Lemma \ref{necessary},  if $k\equiv 2$ (mod 4), then $n$ must be even, while if $k\equiv 0$ (mod 4), then any $n\geq 4$ is possible. We will construct Heffter arrays in both of these cases.   The first construction covers the case when $n$ is even and the second one covers the case when $n$ is odd. 

\begin{theorem} \label{th1} There exists an $H_s(n;k)$ for all $n,k$ even with   $n \ge k \geq 4$.  
\end{theorem}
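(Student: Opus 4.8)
The plan is to exploit the evenness of $n$ by regarding the $n\times n$ array as an $(n/2)\times(n/2)$ array of $2\times 2$ blocks. First I would fix the pattern of filled cells: declare a block to be \emph{filled} exactly when it lies on one of $k/2$ consecutive (cyclic) block-diagonals, so that block-row $I$ has its filled blocks in block-columns $I,I+1,\dots,I+k/2-1$, read modulo $n/2$. Each actual row lies in a unique block-row, passes through its $k/2$ filled blocks, and picks up two cells from each, so it contains exactly $k$ filled cells; the same holds for columns, giving condition~1. The hypothesis $n\ge k\ge 4$ is exactly what guarantees $2\le k/2\le n/2$, so the band fits and every band has at least two blocks. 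I would fill each block with the sign pattern having one positive and one negative entry in each of its rows and columns (signs $+,-$ on top and $-,+$ on the bottom, or the negative of this); then every actual line meets $k/2$ such blocks and so contains $k/2$ positive and $k/2$ negative entries, and shiftability comes for free.

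It then remains to assign magnitudes so that conditions~2 and~3 hold. I would give block-row $I$ the consecutive magnitudes $\{2k(I-1)+1,\dots,2kI\}$ (a block-row has exactly $2k$ cells), which makes the total support equal to $\{1,\dots,nk\}$ and settles condition~3 at once. Writing a filled block as $\bigl(\begin{smallmatrix} a & b\\ c& d\end{smallmatrix}\bigr)$, its contribution to the two row sums is $a+b$ and $c+d$, and to the two column sums is $a+c$ and $b+d$. Thus condition~2 reduces to the purely additive requirement that, along each block-row, the marginals $a+b$ and the marginals $c+d$ each sum to $0$, and, along each block-column, the marginals $a+c$ and $b+d$ each sum to $0$.

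To meet these marginal equations I would build most blocks from the ``standard'' shape $\bigl(\begin{smallmatrix} -(4m{+}1) & 4m{+}2\\ 4m{+}3& -(4m{+}4)\end{smallmatrix}\bigr)$, whose four marginals are the fixed constants $1,-1,2,-2$, together with its negative (marginals $-1,1,-2,2$), so that such blocks cancel one another in bulk; I would then absorb the residual imbalance $\pm(k/2-2)$ with a bounded number of ``correction'' blocks placed at the start of each band, whose magnitudes are lightly permuted. This matches the behaviour visible in $H_s(8;6)$, where in every band the first two blocks (in cyclic order starting at column $I$) are corrections using an interleaved set such as $\{1,3,4,5\}$ and $\{2,6,7,8\}$, while the remaining blocks are standard and use consecutive quadruples.

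The main obstacle is that the row-band conditions and the column-band conditions must be met by one and the same magnitude assignment, while the column marginals couple \emph{different} bands: the cyclic wraparound of the band puts the correction blocks of different block-rows into different block-columns, so I must check that the corrections chosen to zero out the rows simultaneously zero out every column marginal, and that all correction blocks across the whole array use disjoint magnitudes. I expect this bookkeeping to force a short case analysis on $n$ and $k$ modulo $4$ (and separate attention to the smallest bands $k=4$, where every block is a correction block), with the uniform standard blocks forming the bulk of each array and only the explicit verification of the few correction blocks requiring genuine work.
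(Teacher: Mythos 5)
Your block framework is essentially the paper's: both proofs tile the array with $2\times 2$ shiftable blocks carrying a consecutive quadruple of magnitudes, whose row marginals are $\pm1$ and column marginals are $\pm2$ (your ``standard'' block is exactly the paper's $A_i$, and its negative is the paper's $B_i$), placed on block-diagonals and cancelled in pairs. For $k\equiv 0\pmod 4$ this already closes: $k/2$ is even, so you put standard blocks on half the block-diagonals and negated ones on the other half, and every line's marginals cancel. That half of your argument is sound and is the paper's first case.

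The gap is the case $k\equiv 2\pmod 4$, which is precisely the part you defer to ``a short case analysis'' and ``bookkeeping'' without carrying it out. When $k/2$ is odd the $A/B$ pairing leaves a residual of $\pm1$ in every row and $\pm2$ in every column, and these cannot both be absorbed by permuting magnitudes inside blocks of the standard shape, since any block on a consecutive quadruple $\{4m+1,\dots,4m+4\}$ with one $+$ and one $-$ per line has marginals drawn from $\{\pm1,\pm2\}$ with row and column marginals of different parities. The paper's resolution has two ingredients you are missing. First, a third block type $C_i$ whose marginals are the transpose of $A_i$'s (row marginals $\mp2$, column marginals $\mp1$); filling a three-diagonal band with the pattern $A,C,A$ in each block-row makes every row sum vanish ($1-2+1=0$) while every block-column, receiving two $A$'s and one $C$, has column sums $+3$ and $-3$. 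Second, a swap trick: in each even block-row the upper-right entry of $A_{3e}$ and the upper-left entry of $C_{3e+1}$ differ by exactly $3$, so exchanging them in every even block-row transfers $3$ between adjacent columns and zeroes all column sums without disturbing rows, support, or shiftability. The remaining $(k-6)/2$ block-diagonals are then filled with cancelling $A/B$ pairs. The two ``interleaved'' correction blocks you observed in $H_s(8;6)$ are exactly the trace of this swap, but without an explicit, verified mechanism of this kind your proposal does not yet establish the theorem even for $H_s(n;6)$.
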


\begin{proof}

For $ i \geq 0$ let $A_i$, $B_i$ and $C_i$ be  $2 \times 2$ arrays defined by

\renewcommand{\tabcolsep}{2pt}
\begin{center}

$A_i = $ \begin{tabular}{|c|c|} \hline
$-1-4i$&$2+4i$\\ \hline
$3+4i$& $-4-4i$ \\ \hline
\end{tabular}\hspace{.3in}
$B_i =$ \begin{tabular}{|c|c|} \hline
$1+4i$&$-2-4i$\\ \hline
$-3-4i$& $4+4i $\\ \hline
\end{tabular}\hspace{.3in}
$C_i =$ \begin{tabular}{|c|c|} \hline
$1+4i$&$-3-4i$\\ \hline
$-2-4i$&$ 4+4i $\\ \hline
\end{tabular}

\end{center}

\medskip

A few things to note first.  If {\em any}  $A_i$ and  $B_j$ are aligned in the same two rows, they contribute zero to both of the row sums. Similarly if they are aligned in any two columns, they contribute zero to both column sums.  Also if $A_i$, $A_j$ and $C_k$ (for any $i,j,k$) are aligned in the same two rows, they contribute zero to both of the row sums.  However if they are aligned in any two columns, they contribute $+3$ to the first column and $-3$ to the second.

We first present the proof in the case when $k \equiv 0$ (mod 4). We will construct an $H_s(2n;4k)$ when $k \leq  n/2 $.
Let $D$ be a $n \times n$ empty array. In $D$ find two disjoint sets of $k$ disjoint transversals, $T_1$ and T$_2$. This is easy to do by just choosing $2k$ (broken) diagonals and letting the first $k$ of them be $T_1$ and the remaining $k$ of them be  $T_2$ (but any two sets of $k$ disjoint transversals will work). Note that this implies that $n \ge 2k$.   Now in the $nk$ total cells of the $T_1$ transversals place the arrays $A_0, A_1, \ldots ,A_{nk-1}.$  In the cells of  the $T_2$ transversals place the arrays $B_{nk}, B_{2nk+1}, \ldots ,B_{2nk-1}$.  Denote the resulting array as $H_D$. Since each row and column of $H_D$ contains the same number of $A_i$ squares as $B_i$ squares, by the paragraph above  all the row and column sums are equal to zero.  Also it is easy to see that each row and column of the resulting array contains $2k$ positive numbers and $2k$ negative numbers.  Hence we have constructed an  $H_s(2n;4k)$.

Next we deal with the case when $k \equiv 2$ (mod 4).  We will construct an $H_s(2n;k)$ where $k=2r$ with $r$ odd and $r \leq n$.  Let $D$ be an empty $n \times n$ array where the rows and columns are indexed by $0,1,\ldots ,n-1$. In cells $(i,i), (i,i+1)$ and $(i,i+2)$, $0\leq i \leq n-1$ of $D$ place the arrays $A_{3i},C_{3i+1}$ and   $A_{3i+2}$, respectively where the row arithmetic is performed in $\Z_n$. Again, denote the resulting array as $H_D$. As noted above, all the row sums of $H_D$ will be zero, while the column sums will be $3,-3,3,-3,\dots, -3$.  Now, let $e<n$ be an even number.  In row $e$ we note that the upper right cell of  $A_{3e}$ is $2+12e$, while the upper left corner of $C_{3e+1}$ is $1+4(3e+1) = 5+12e$.  Hence the upper left cell of $C_{3e+1}$ is three more than the upper right cell of  
$A_{3e}$.  Swapping these two cells in every even row will reduce each even column of $H_D$ by three while increasing each odd column by 3. The result is that now each column also has sum zero.    Clearly, each row and column contains the same number of positive and negative values, hence we have constructed an 
$H_s(2n;6)$ for all $n \geq 6$.  To complete the proof all that needs to be done is to add $r-3$ paired transversals of A's and B's as was done in the prior paragraph (use $(r-3)/2$ transversals for the A's and $(r-3)/2$ for the B's).  This yields an $H_s(2n;2r)$ in the case where $r$ is odd, completing the proof.
\end{proof}

The $H_s(6;4)$ and the $H_s(8;6)$  given in Example \ref {ex1} were constructed via the method of Theorem \ref{th1}.  


\begin{theorem} \label{th2} There exists an $H_s(n;4)$ for all $n \ge 4$.  
\end{theorem}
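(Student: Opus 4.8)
Since $k=4$ is even, every even value of $n$ is already covered by Theorem~\ref{th1} (put $k=4$ there, which produces an $H_s(n;4)$ for each even $n\ge 4$), so the plan is to give a direct construction for odd $n\ge 5$. I would place the entries on four consecutive broken diagonals of the $n\times n$ grid, leaving every other cell empty: indexing rows and columns by $\Z_n$, the filled cells of row $i$ are $(i,i),(i,i+1),(i,i+2),(i,i+3)$, with the column index read modulo $n$. Each of these four diagonals meets every row once and every column once, so Condition~1 (four filled cells per line) holds automatically. I would then make the two outer diagonals $d=0,3$ carry positive entries and the two inner diagonals $d=1,2$ carry negative entries; since every row and every column then meets exactly two positive and two negative cells, the array is shiftable with no further work.

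For the actual values I would try the affine template
\[
E_0(i)=i+1,\quad E_1(i)=-(2n+1+i),\quad E_2(i)=-(n+1+i),\quad E_3(i)=3n+1+i,
\]
where $E_d(i)$ denotes the entry in cell $(i,i+d)$ and $0\le i\le n-1$. As $i$ runs over $\{0,\dots,n-1\}$ the absolute values swept out by $E_0,E_2,E_1,E_3$ are the consecutive blocks $\{1,\dots,n\}$, $\{n+1,\dots,2n\}$, $\{2n+1,\dots,3n\}$, $\{3n+1,\dots,4n\}$, so Condition~3 holds exactly. Moreover $E_0(i)+E_1(i)+E_2(i)+E_3(i)=0$ identically, and because each entry depends only on its own row $i$ and not on the wrapped column, every row sum is automatically $0$, for all $i$ and with no exceptions.

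The only thing left is the column sums, and this is where I expect the real difficulty. A cell on diagonal $d$ in column $j$ sits in row $j-d$ reduced modulo $n$, so column $j$ contributes $E_0(j)+E_1(j-1)+E_2(j-2)+E_3(j-3)$ with the arguments taken mod $n$. For $j\ge 3$ no reduction occurs and a one-line computation gives column sum $0$; but the wrap-around at the low columns evaluates some $E_d$ a full period $n$ away from its ``linear'' value, and a short calculation shows this throws exactly two of the columns off (columns $0$ and $2$ in the above indexing), by $-n$ and $+n$ respectively. Indeed, for affine templates the wrap contributes $\alpha_d n$ to column $j$ for each diagonal with $d>j$, where $\alpha_d$ is the slope of $E_d$, and these cannot all cancel, so \emph{some} correction is unavoidable for this diagonal pattern. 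The main obstacle is therefore a localized repair: reshuffling a bounded set of entries among the two offending columns near the seam so as to cancel the $\pm n$ defect while leaving every row sum and the whole support intact. This is precisely the boundary adjustment visible in the top rows of the $H_s(5;4)$ and $H_s(7;4)$ arrays of Example~\ref{ex1}, and I would carry it out by giving an explicit, $n$-independent description of the corrected cells, treating the smallest odd orders (where the seam is a large fraction of the array) as separate base cases if required. Once the corrected columns are verified, Conditions~1--3 and shiftability all hold, which together with the even case completes the proof.
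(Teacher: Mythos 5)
Your skeleton is essentially the paper's: four consecutive broken diagonals carrying the blocks $\{1,\dots,n\}$, $\{n+1,\dots,2n\}$, $\{2n+1,\dots,3n\}$, $\{3n+1,\dots,4n\}$, two diagonals positive and two negative, with each cross-section summing to zero except near the wrap-around seam. Your computation of the defect is also correct: with your template the columns $j\ge 3$ and $j=1$ sum to $0$ while columns $0$ and $2$ sum to $-n$ and $+n$. But the proof stops exactly where the theorem's content begins. The ``localized repair'' is not routine, and the most natural fix fails: to move $n$ from column $2$ to column $0$ by a swap you would need a row containing entries $a$ in column $0$ and $b$ in column $2$ with $b-a=n$, and the only two rows meeting both columns ($i=0$ and $i=n-1$) give $b-a=-(n+2)$ and $b-a=7n$ respectively. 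So no single in-row swap cancels the defect, and you have not exhibited any alternative exchange, nor the promised base cases. Since you also (correctly) argue that no affine template can avoid a defect, the repair is unavoidable and is precisely the missing step.

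The paper sidesteps this by building the array column by column rather than diagonal by diagonal: column $c$ ($2\le c\le n-1$) receives $c,\,-(2n+c),\,-(n+c),\,3n+c$ in rows $c,\dots,c+3$, so every column sums to zero by construction, and the seam is absorbed into the rows by writing down the first and last columns explicitly with the two negative entries in the opposite order ($1,-(n+1),-(2n+1),3n+1$ and $n,-2n,-3n,4n$); rows $1,2,3$ are then checked by hand and the generic row identity handles the rest. That reordering of two entries in two specific columns is exactly the explicit, $n$-independent correction your argument needs but does not supply. Note also that the paper's construction works uniformly for all $n\ge 4$, so the reduction of the even case to Theorem~\ref{th1} is unnecessary.
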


\begin{proof} Label the rows and columns $1,2, \ldots ,n$.
Place the symbols $1, -(n+1),-(2n+1), 3n+1$ in the first column starting in rows 1,2,3 and 4 respectively.  
For each column $c$ with $2 \leq c \leq n-1$, place the symbols $c, -(2n+c), -(n+c), 3n+c$ in  rows
$c,c+1,c+2,c+3$, respectively (arithmetic on the rows is modulo $n$).  Finally, in the last column place $n,-2n,-3n,4n$ in rows $n,1,2,3$, respectively.  

It is easy to see that the support of this array is $\{1,2, \ldots, 4n\}$ and that each row and column contains 2 positive values and 2 negative values.  Also since $c-(n+c)-(2n+c)+ (3n+c)=0$, we see that each column sum is zero as desired.  Now we check the row sums. We check the first three rows individually. Row 1 contains the symbols $1, 4n-2, -(2n-1),$ and $-(2n)$  so the sum is zero.  Row 2 contains
$-(n+1), 2, (4n-1)$ and $-(3n)$ also adding to zero. Row 3 contains $-(2n+1), -(2n+2), 3, 4n$, again adding to zero.  Now let $4\leq r \leq n$.  The symbols in row $r$ are $r, -(2n+r-1), -(n+r-2), 3n+r-3$  (working backwards from the diagonal entry).  Since $r -(2n+r-1) -(n+r-2) + (3n+r-3)=0$, we have that each row adds to zero and hence we have constructed an $H_s(n;4)$.
\end{proof}

The $H_s(7;4)$ given in Example \ref {ex1} was constructed via the method of Theorem \ref{th2}. 


\medskip

If $D$ is an $n \times n$ array with rows and columns labeled $0,1,\ldots, n-1$, for $i = 0,1 \ldots, n-1$ define the $i$th  diagonal $D_i$ to be the set of cells $D_i= \{(0,i),(1,i+1), \ldots,(n-1,i-1)\}$ where all arithmetic is performed in $\Z_n$.  We say that the diagonals $D_i$ and $D_{i+1}$ are {\em consecutive} diagonals.
We see that the $H_s(n;4)$ constructed via Theorem \ref{th2} has the property that all of the filled cells are contained in exactly four consecutive diagonals of the array.  We now show how to use this fact to add four filled cells per row and column to an existing Heffter array $H$ if $H$ contains four consecutive diagonals of empty cells.  

\begin{lemma} \label{add4} If there exists an integer Heffter array $H(n;k)$ which  has $s$ disjoint sets of four consecutive empty diagonals,  then there exists an
$H(n;k+4s)$.  Furthermore, if the $H(n;k)$ is shiftable, then there exists an $H_s(n;k+4s)$.
\end{lemma}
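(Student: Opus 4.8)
The plan is to insert, into each of the $s$ blocks of four consecutive empty diagonals, a suitably shifted copy of the $H_s(n;4)$ guaranteed by \tref{th2}, choosing the shift amounts so that the resulting supports exactly tile the integers from $nk+1$ up to $n(k+4s)$.

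First I would use the observation noted just before the statement: the $H_s(n;4)$ of \tref{th2} has all of its filled cells lying on four consecutive diagonals, is shiftable, and has support $\{1,2,\ldots,4n\}$. A cyclic shift of the columns, sending column $c$ to column $c+\delta$ (arithmetic in $\Z_n$), carries the diagonal $D_i$ to $D_{i+\delta}$; it merely permutes the entries within each row and permutes the columns as intact blocks, so it preserves every row sum, every column sum, the support, and the positive/negative balance in each line. Hence for any prescribed block of four consecutive diagonals $D_a,D_{a+1},D_{a+2},D_{a+3}$ I can produce a copy of $H_s(n;4)$ whose filled cells occupy exactly that block, still with all line sums zero and two positive and two negative entries in each row and column.

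Next I would fill the $j$th block (for $j=1,\ldots,s$) with the copy obtained from $H_s(n;4)$ by first aligning its diagonals to that block as above and then applying the shift $\pm x_j$ with $x_j=nk+4n(j-1)$. Since $H_s(n;4)$ is shiftable, $H_s(n;4)\pm x_j$ keeps all line sums zero and has support $\{nk+4n(j-1)+1,\ldots,nk+4nj\}$. Because the blocks are disjoint and all their cells were empty, the insertions do not interfere, and the supports $\{nk+1,\ldots,nk+4n\},\ldots,\{nk+4n(s-1)+1,\ldots,nk+4ns\}$ tile $\{nk+1,\ldots,n(k+4s)\}$ exactly; together with the original support $\{1,\ldots,nk\}$ this yields the full support $\{1,\ldots,n(k+4s)\}$, so condition 3 holds.

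Finally I would verify the two remaining conditions directly. Each row and each column of the original receives exactly one new cell from each inserted block, so every line now has $k+4s$ filled cells; and since each inserted copy contributes $0$ to every row sum and every column sum, the line sums remain $0$. This produces an $H(n;k+4s)$. If the original $H(n;k)$ is shiftable, then each line already has equal numbers of positive and negative entries, and each inserted copy adds two of each per line, so the result is again shiftable, giving $H_s(n;k+4s)$. The only real care needed is the bookkeeping: confirming that the cyclic column shift preserves all array properties and that the shifts $x_j$ are chosen so the new supports form consecutive, non-overlapping blocks. I expect no substantive obstacle beyond this verification.
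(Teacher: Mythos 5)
Your proposal is correct and follows essentially the same route as the paper: align the $H_s(n;4)$ of Theorem~\ref{th2} with each block of four empty diagonals by a cyclic permutation (the paper permutes rows, you permute columns---same effect) and then apply $\pm x_j$ so that the inserted supports continue consecutively past $nk$. The only blemish is the phrase ``exactly one new cell from each inserted block,'' which should read ``exactly four new cells, one per diagonal, from each inserted block'' to match your (correct) count of $k+4s$ filled cells per line.
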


\begin{proof}
Let $H$ be an integer Heffter array $H(n;k)$ which has four consecutive empty diagonals, say $D_i, D_{i+1}, D_{i+2}$ and $D_{i+3}$.  Let $J$ be an $H_s(n;4)$ constructed via Theorem \ref{th2}. Note that in row $i+3$ of $J$, the filled cells are in columns $i,i+1,i+2$ and $i+3$. Cyclically permute the rows of $J$ so that row $r$ moves to row $r-i-3$ (mod $n$).  Note this places row $i+3$ as the new row 0 and that the filled cells of $J$ are now contained in the diagonals $D_i, D_{i+1}, D_{i+2}$ and $D_{i+3}$. 
Clearly $J$ is a shiftable array.  Let $J' = J\pm nk$.  As noted before, $J'$ has row and column sums equal zero and support 
$S= \{1+nk,2+nk, \ldots ,x+ nk\}$.

Now combine (add) $H$ and $J'$.  This array now contains $k+4$ filled cells in each row and each column and furthermore the filled cells in each row and each column add to $0$. Hence we have constructed an
 $H(n;k+4)$, or an $H_s(n;k+4)$ if the $H(n;k)$ was shiftable.  If the $H(n;k)$ has $s$ disjoint sets of four consecutive empty diagonals, then $s$ repeated applications of this process yields an $H(n;k+4s)$ whenever $k+4s \leq n$.  \end{proof}

As a corollary we get the existence of Heffter arrays $H_s(n;4k)$ for all $4\leq 4k \leq n$.

\begin{corollary}\label{4k}
There exists an $H_s(n;4k)$ for all $n$ and $4\leq 4k \leq n$.  
\end{corollary}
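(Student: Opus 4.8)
The plan is to obtain the desired $H_s(n;4k)$ by starting from the four-diagonal building block of \tref{th2} and then filling in the remaining diagonals four at a time using \lref{add4}. \tref{th2} supplies an $H_s(n;4)$ all of whose filled cells lie in exactly four consecutive diagonals; this already settles the case $k=1$, where the hypothesis $4\le 4k\le n$ reduces to $n\ge 4$. For $k\ge 2$ I would apply \lref{add4} to this base array with the lemma's parameter taken to be $s=k-1$, producing an $H_s(n;4+4(k-1))=H_s(n;4k)$. Because the base array from \tref{th2} is shiftable, \lref{add4} returns a shiftable array, which is precisely what the corollary asserts.

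The only thing that genuinely needs checking is the hypothesis of \lref{add4}, namely that the base $H_s(n;4)$ has $k-1$ disjoint sets of four consecutive empty diagonals. Since the base array occupies only four of the $n$ diagonals, its complement is a single contiguous run of $n-4$ empty diagonals. The bound $4k\le n$ is equivalent to $4(k-1)\le n-4$, so this run is long enough to be split into at least $k-1$ disjoint blocks of four consecutive diagonals; taking the first $k-1$ of these blocks verifies the hypothesis exactly. Feeding these into \lref{add4} and iterating $k-1$ times is what yields the claimed array.

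I expect no real obstacle here: the statement is a direct combination of \tref{th2} and \lref{add4} with the counting inequality $4(k-1)\le n-4$. The only point deserving any care is confirming that the footprint of the \tref{th2} construction is genuinely four consecutive diagonals — leaving one contiguous block of $n-4$ empties rather than several scattered gaps — which is exactly the property recorded in the remarks just preceding \lref{add4}, so I would simply cite that observation rather than re-derive the diagonal indices.
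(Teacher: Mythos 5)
Your proposal is correct and follows exactly the paper's route: start from the $H_s(n;4)$ of Theorem~\ref{th2}, whose filled cells occupy four consecutive diagonals, and apply Lemma~\ref{add4} with $s=k-1$, using $4(k-1)\le n-4$ to supply the required disjoint blocks of empty diagonals. The paper's own proof is just the one-line citation of these two results; your write-up supplies the same argument with the hypothesis check made explicit.
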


\begin{proof} Begin with the  $H_s(n;4)$ from Theorem \ref{th2} and apply Lemma \ref{add4}.  \end{proof}

We summarize the main results of this section in the next theorem.

\begin{theorem}There exists an $H_s(n;k)$ if and only if $k$ is even and $nk \equiv 0$ (mod 4). 
\end{theorem}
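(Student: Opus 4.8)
The plan is to recognize that this is a summary theorem whose two directions are each essentially already established, so the work is to assemble the earlier results and check that the assembly is exhaustive. For the forward (necessity) direction I would simply invoke Lemma~\ref{necessary}, which states that the existence of an $H_s(n;k)$ forces $k$ even and $nk \equiv 0 \pmod 4$; nothing further is needed there. For the converse I would assume $k$ even and $nk \equiv 0 \pmod 4$, together with the constraint $k \le n$ that is forced by having an $n \times n$ array with $k$ filled cells per row, and with $k \ge 4$: the value $k=2$ is degenerate, since then the two entries of each row must be negatives of one another and so the $2n$ filled cells cannot realize the support $\{1,\dots,2n\}$, which requires $2n$ distinct absolute values.

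For the sufficiency argument I would split on the residue of $k$ modulo $4$. If $k \equiv 0 \pmod 4$, then $4 \mid k$ and $4 \le k \le n$, so Corollary~\ref{4k} produces an $H_s(n;k)$ directly, with \emph{no} parity restriction on $n$. This single case disposes of every admissible pair $(n,k)$ with $k$ divisible by $4$, and in particular it handles exactly the odd-$n$ instances that Theorem~\ref{th1} does not reach.

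The remaining case is $k \equiv 2 \pmod 4$. Here the congruence $nk \equiv 0 \pmod 4$ forces $n$ to be even, and since $k$ is even with $k \ge 4$ we in fact have $k \ge 6$. With both $n$ and $k$ even and $n \ge k \ge 4$, Theorem~\ref{th1} then yields an $H_s(n;k)$, completing the argument.

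Because the two constructive tools (Theorem~\ref{th1} and Corollary~\ref{4k}) carry all of the real weight, the only genuine work here is bookkeeping, and the point I would be most careful about is the interaction between the parities. Specifically, I must verify that the odd-$n$ admissible pairs occur only when $k \equiv 0 \pmod 4$, so that Corollary~\ref{4k} covers them, and that every $k \equiv 2 \pmod 4$ pair automatically has $n$ even, so that the hypotheses of Theorem~\ref{th1} are met exactly; I would also confirm that the side condition $n \ge k \ge 4$ of Theorem~\ref{th1} and the condition $4 \le k \le n$ with $4 \mid k$ of Corollary~\ref{4k} hold on the nose in each case, and flag the degenerate $k = 2$ boundary noted above as the one value where the stated equivalence must be read with $k \ge 4$.
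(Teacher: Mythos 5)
Your proposal is correct and follows essentially the same route as the paper, whose proof is the one-line observation that the statement follows from Lemma~\ref{necessary} (necessity), Theorem~\ref{th1} and Corollary~\ref{4k} (sufficiency); your case split on $k \bmod 4$ is exactly the intended bookkeeping. Your explicit remark that the $k\equiv 0\pmod 4$ case must be handled by Corollary~\ref{4k} alone (to cover odd $n$), and your flagging of the degenerate $k=2$ boundary, are details the paper leaves implicit but are consistent with it.
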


\begin{proof}  This follows from Lemma \ref{necessary}, Theorem \ref{th1} and Corollary \ref{4k}. \end{proof}

\section{H({\em n};{\em k}) with {\em k} $\equiv$ 3  (mod 4) } \label{section3}

The first person to recognize the relation between combinatorial designs and graph embeddings was Heffter \cite{H}, who showed how to describe embeddings combinatorially using a solution to a difference problem in modulo arithmetic. He used this to construct triangular biembeddings of some complete graphs. It is still unknown if his construction yields an infinite class. 

While proving the Map Color Theorem \cite{R} Ringel and Youngs showed how to record these edge labelings as a type of flow on a cubic graph called a current graph (a precise definition will follow shortly). At first current graphs were considered a kind of nomogram of little interest independently; however, in 1974 Gross and Alpert \cite{GA} developed a general theory of current graphs. In \cite{Y} Youngs gave current graphs based on M\"{o}bius ladders with $n=4m+1$ rungs and on cylindrical ladders with $n=4m$ rungs (yielding infinite classes of graph embeddings). A further discussion of Young's current assignment on ladder graphs appears in \cite{anderson}. We discuss these constructions and how they give our desired Heffter arrays $H(n;3)$. We then extend the construction to $H(n;k)$ for all $k \equiv 3 \pmod 4$ with $3\leq k< n$. 

An {\em arc} $\vec e$ in a graph $G$ is edge $e = \{u,v\}$ directed in one of two ways: $(u,v)$ or $(v,u)$. The first vertex in an arc is the {\em tail}, the second the {\em head}. The set of arcs in $G$ is denoted $A(G)$. If $\vec e$ is an arc, let $-\vec e$ be the same edge with opposite direction. Let $S$ be the set of $|A(G)|$ integers $\{ \pm 1,\dots,\pm |E(G)|\}$. 

\medskip

An {\em integer-current assignment} is a bijection $\kappa : A(G) \rightarrow S$ such that
\begin{enumerate}
\item {\em (respects negatives)} $\kappa(-\vec e) = - \kappa(\vec e)$, and
\item {\em (Kirchoff's current law, KCL)} for each $u \in V(G)$, $\kappa(\vec e_1) + \cdots + \kappa(\vec e_k) = 0$ where $\{\vec e_1,\dots, \vec e_k\}$ are the set of arcs with tail $u$. 
\end{enumerate}

Bipartite current graphs with an integer-current assignment are closely related to Heffter arrays. 

\begin{lemma}\label{equivalence} There exists a $k$-regular bipartite graph of order $2n$ with an integer-current assignment if and only if there exists an $H(n;k)$.  \end{lemma}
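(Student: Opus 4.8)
The plan is to establish the equivalence by constructing an explicit bijection between the filled cells of an $H(n;k)$ and the arcs of a $k$-regular bipartite graph, exploiting the natural bipartite structure of an $n \times n$ array whose two vertex classes correspond to the $n$ rows and the $n$ columns. First I would set up the correspondence: given a $k$-regular bipartite graph $G$ with vertex classes $R = \{r_1,\dots,r_n\}$ and $C = \{c_1,\dots,c_n\}$, I associate to each edge $\{r_i,c_j\}$ the cell $(i,j)$ of an $n \times n$ array. The defining property of a bipartite graph is that every edge joins some $r_i$ to some $c_j$, so this gives a well-defined placement of entries into cells, and $k$-regularity says precisely that each row-vertex $r_i$ and each column-vertex $c_j$ meets exactly $k$ edges, which translates to $k$ filled cells per row and per column — matching Condition~1 of the definition of $H(n;k)$.

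Next I would transfer the current assignment to array entries. Orienting each edge $\{r_i,c_j\}$ from $r_i$ to $c_j$, I would define the entry in cell $(i,j)$ to be $\kappa((r_i,c_j))$. The ``respects negatives'' axiom together with the fact that $\kappa$ is a bijection onto $S = \{\pm 1,\dots,\pm|E(G)|\}$ and $|E(G)| = nk/2$ guarantees that for each $x \in \{1,\dots,nk/2\}$ exactly one of $x,-x$ appears as a value $\kappa(\vec e)$; I need to reconcile this with Condition~3, which ranges over $\{1,\dots,nk\}$. Here I should double-check the count: the array has $nk$ filled cells, but a $k$-regular bipartite graph of order $2n$ has $nk/2$ edges, each contributing one oriented value to one cell, so in fact the support has size $nk/2$, not $nk$ — this is the one place I must be careful, and I suspect the intended reading is that Condition~3 for the integer Heffter array $H(n;k)$ should give each $x \in \{1,\dots,nk/2\}$ appearing exactly once (consistent with $nk$ filled cells holding the $nk$ values $\{\pm 1,\dots,\pm nk/2\}$). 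I would phrase the bijection so that the ``no repeated $\pm x$'' condition corresponds exactly to $\kappa$ being a bijection respecting negatives.

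For the row and column sums, I would argue that KCL at each vertex becomes the vanishing of the corresponding row or column sum. Orienting all edges from $R$ to $C$, the arcs with tail $r_i$ are exactly those edges incident to $r_i$, so KCL at $r_i$ states that the sum of the entries $\kappa((r_i,c_j))$ over filled cells in row $i$ is zero — this is Condition~2 for rows. For the columns, the arcs with tail $c_j$ are the reversals $(c_j,r_i)$, whose currents are $-\kappa((r_i,c_j))$ by the negatives axiom, so KCL at $c_j$ gives $\sum_i -\kappa((r_i,c_j)) = 0$, equivalently $\sum_i \kappa((r_i,c_j)) = 0$, which is Condition~2 for columns. Thus KCL at the two color classes encodes exactly the row-sum-zero and column-sum-zero requirements.

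Finally I would note that every step is reversible, giving the converse: from an $H(n;k)$ one builds the bipartite graph on $R \sqcup C$ with an edge $\{r_i,c_j\}$ for each filled cell $(i,j)$, sets $\kappa((r_i,c_j))$ equal to the entry and $\kappa((c_j,r_i))$ to its negative, and checks that $k$-regularity, the bijection-with-negatives property, and KCL all follow from Conditions~1--3. The main obstacle I anticipate is purely bookkeeping rather than conceptual: pinning down the support size and the precise form of Condition~3 so that the bijection on $S = \{\pm 1,\dots,\pm|E(G)|\}$ matches the Heffter array's value set cleanly, and making sure the column orientation sign is handled consistently via the ``respects negatives'' axiom. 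Once that normalization is fixed, the equivalence is a direct dictionary translation in both directions.
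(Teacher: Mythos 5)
Your overall strategy is exactly the paper's: index the rows by one side of the bipartition and the columns by the other, let each edge $\{r_i,c_j\}$ occupy cell $(i,j)$ with entry $\kappa((r_i,c_j))$, observe that $k$-regularity gives $k$ filled cells per row and per column, that the bijectivity of $\kappa$ onto $S$ together with $\kappa(-\vec e)=-\kappa(\vec e)$ gives the ``distinct up to sign'' condition, and that KCL at the $R$-vertices and at the $C$-vertices gives the zero row sums and zero column sums respectively (with the sign at the column vertices absorbed by the respects-negatives axiom). The reverse direction is the same dictionary read backwards, just as in the paper.

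The one genuine error is your edge count, and it occurs at precisely the point you flagged as needing care. A $k$-regular bipartite graph of order $2n$ has $2n$ vertices each of degree $k$, so by the handshake lemma it has $2nk/2 = nk$ edges, not $nk/2$. Hence $|E(G)| = nk$, the set $S = \{\pm 1,\dots,\pm|E(G)|\}$ is $\{\pm 1,\dots,\pm nk\}$, and the bijection $\kappa$ (which assigns to the two arcs of each edge a value and its negative) realizes each of $1,\dots,nk$ exactly once up to sign. This matches Condition~3 of the definition of $H(n;k)$ verbatim, and the number of edges equals the number $nk$ of filled cells, so there is no discrepancy to reconcile and no need to reinterpret the support as $\{1,\dots,nk/2\}$. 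With that correction your argument is complete and coincides with the paper's proof.
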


\begin{proof} Let $R\, \cup\, C$ be a bipartition of the vertices in the given graph $G$. Let $A$ be an $n \times n$ array whose rows are indexed by $R$ and columns by $C$. In row $i$, column $j$ place $a_{i,j} = \kappa((i,j))$. All entries are distinct up to sign and {\em KCL} shows that the row and column sums of $A$ are all 0. The construction is easily reversed to build the graph $G$ from the array $A$. \end{proof} 

From Lemma \ref{necessary} if there exists an $H(n;k)$, then $nk \equiv 0,3$ (mod 4). In this section  $k \equiv 3$ (mod 4), so $n\equiv 1$ or 0 (mod 4). Section \ref{sect3.1} studies $n\equiv 1$ (mod 4) while Section \ref{sect3.2} studies $n\equiv 0$ (mod 4). 

\subsection{{\em H(n;k)} with  {\em n} $\equiv$ 1 (mod 4) and {\em k} $\equiv$ 3 (mod 4)} \label{sect3.1}

A {\em M\"{o}bius ladder} on $4m+1$ {\em rungs} is a bipartite graph with vertex set $R \, \cup \, C$ where $R = \{r_i \,|\, 1 \le i \le 4m+1\}$ and $C = \{c_i \,|\, 1 \le i \le 4m+1\}$ 
and edge set 
$\{ \{r_i,c_{j}\} \, | \, 1 \le i \le 4m+1, j = i-1, j=i \mbox{ or } j=i+1\}$ 
(the subscripts are read modulo $4m+1$). Example \ref{m13} shows the M\"{o}bius ladder on 13 rungs. 

Youngs \cite{Y} gives the following integer-current assignment to these graphs. He considered the entries as elements of the integers modulo $24 m + 7$, but interestingly noted it has the ``further aesthetic advantage'' that KCL holds over the integers as well. We have verified this claim.

\newpage
\begin{tabl} \label {3.2} {\rm \cite{Y}} Currents on the M\"{o}bius ladder with $4m+1$ rungs. \end{tabl}
\begin{center}
$\begin{array}{lll}
\kappa((r_{2i-1},c_{2i})) = 8m+3-i, & \kappa((c_{2i-1},r_{2i})) = 8m+2+i, & i=1,\dots,m\\
\kappa((r_{2i},c_{2i+1})) = 12m+3-i,& \kappa((c_{2i},r_{2i+1})) = 4m+2+i & i=1,\dots,m\\
\kappa((r_{2m+2i-1},c_{2m+2i})) =9m+2+i, & \kappa((c_{2m+2i-1},r_{2m+2i})) = 7m+3-i & i=1,\dots,m\\
\kappa((r_{2m+2i},c_{2m+2i+1})) = 5m+2+i, &\kappa((c_{2m+2i},r_{2m+2i+1})) = 11m+3-i  & i=1,\dots,m\\
\kappa((c_i,r_i)) = 4m+1-i & & i=1,\dots,2m \\
\kappa((r_{2m+i+1},c_{2m+i+1})) = 2m-i &  &i = 1,\dots,2m-1 \\
\kappa((r_{4m+1},c_{1})) = 12m+3 & \kappa((c_{4m+1},r_{1})) = 4m+2 \\
\kappa((c_{2m+1},r_{2m+1})) = 4m+1 & \kappa((c_{4m+1},r_{4m+1})) = 2m
\end{array}$
\end{center}

In the next examples we give an  integer-current assignment for the M\"{o}bius ladder on 13 rungs followed by the resulting $H(13;3)$.

\begin{example}\label{m13} An integer-current assignment for the M\"{o}bius ladder on 13 rungs ($m = 3$). 
\end{example}

\begin{figure}[h] 
  \centering
  
  \includegraphics[bb=0 -1 579 105,width=6.30in,keepaspectratio]{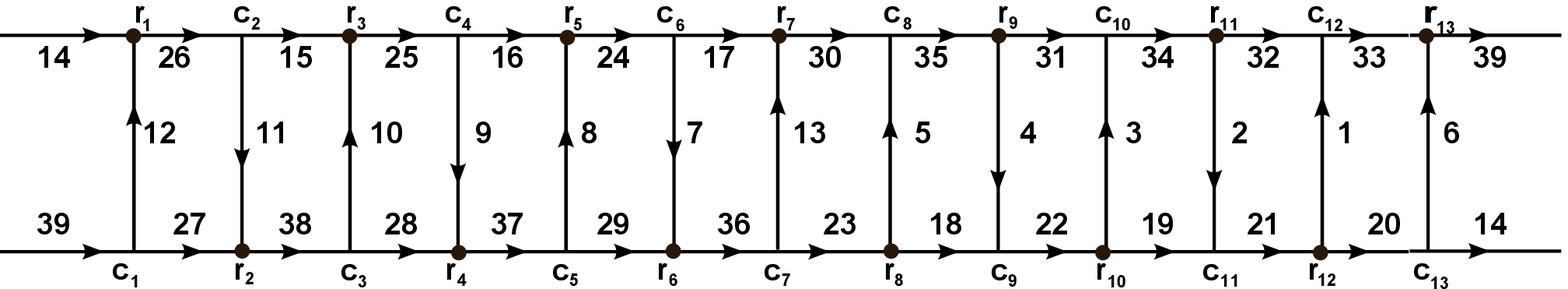}

  \label{fig:ladder13rung}
\end{figure}

\renewcommand{\tabcolsep}{2pt}
\begin{example} The $H(13;3)$ resulting from Example \ref{m13} using Lemma \ref{equivalence}.
\label{h13}
\end{example} {\small
\begin{center}
\begin{tabular}{|c|c|c|c|c|c|c|c|c|c|c|c|c|} \hline
-12&26&&&&&&&&&&&-14 \\ \hline
-27&-11&38&&&&&&&&&& \\ \hline
&-15&-10&25&&&&&&&&& \\ \hline
&&-28&-9&37&&&&&&&& \\ \hline
&&&-16&-8&24&&&&&&& \\ \hline
&&&&-29&-7&36&&&&&& \\ \hline
&&&&&-17&-13&30&&&&& \\ \hline
&&&&&&-23&5&18&&&& \\ \hline
&&&&&&&-35&4&31&&& \\ \hline
&&&&&&&&-22&3&19&& \\ \hline
&&&&&&&&&-34&2&32& \\ \hline
&&&&&&&&&&-21&1&20 \\ \hline
39&&&&&&&&&&&-33&-6 \\ \hline
\end{tabular}
\end{center}
} 

An array $A = A(i,j)$ is {\em cyclically tridiagonal} if all nonzero entries $A(i,j)$ have $|i - j| \leq 1$ except for $A(1,n)$ and $A(n,1)$. The $H(13;3)$ in Example \ref{h13} above is cyclically tridiagonal.
Using Lemma \ref{equivalence} and Youngs' current graph we get the following. 

\begin{theorem} \label{4m+1,3}  There exists a cyclically tridiagonal $H(4m+1;3)$ for all $m \geq 1$.  
\end{theorem}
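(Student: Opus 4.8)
The plan is to invoke \lref{equivalence} and realize the desired array as the one arising from Youngs' current assignment on the M\"{o}bius ladder. By \lref{equivalence}, an $H(4m+1;3)$ is equivalent to a $3$-regular bipartite graph on $2(4m+1)$ vertices carrying an integer-current assignment, so it suffices to exhibit one such graph together with a valid assignment. The M\"{o}bius ladder $M$ on $4m+1$ rungs is exactly such a graph: it is bipartite with parts $R$ and $C$, it is $3$-regular because each $r_i$ is joined only to $c_{i-1}$, $c_i$ and $c_{i+1}$, and it has $|E(M)| = 3(4m+1) = 12m+3$ edges, so the target symbol set is $S = \{\pm 1, \dots, \pm(12m+3)\}$. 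The cyclically tridiagonal property is then automatic: placing $\kappa((r_i,c_j))$ in cell $(i,j)$ as in \lref{equivalence}, the only filled cells are those with $j \in \{i-1,i,i+1\} \pmod{4m+1}$, which forces $|i-j| \le 1$ for every entry apart from the two wrap-around cells $(1,4m+1)$ and $(4m+1,1)$.

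It remains to check that the assignment $\kappa$ recorded in \Tref{3.2} is a genuine integer-current assignment. Condition~1, that $\kappa$ respects negatives, holds by the convention $\kappa(-\vec e) = -\kappa(\vec e)$, so the table lists only one arc per edge. To confirm $\kappa$ is a bijection onto $S$ I would show that the absolute values appearing in \Tref{3.2} exhaust $\{1,2,\dots,12m+3\}$ without repetition. Each listed family has the form $c \pm i$ with $i$ running over a block of consecutive integers, so it sweeps out a contiguous interval; one checks that the intervals coming from the two rung families $\kappa((c_i,r_i))$ and $\kappa((r_{2m+i+1},c_{2m+i+1}))$, the eight diagonal families, and the four exceptional boundary values fit together end-to-end and tile $\{1,\dots,12m+3\}$ exactly once. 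This is a finite bookkeeping check comparing interval endpoints.

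The substantive step, and the main obstacle, is verifying Kirchoff's current law at each of the $2(4m+1)$ vertices. I would group the vertices into the natural families indexed by parity and by which half of the ladder they occupy, namely $r_{2i-1}$, $r_{2i}$, $c_{2i-1}$, $c_{2i}$ for $1 \le i \le m$, the analogous families of $r_{2m+\,\cdot\,}$ and $c_{2m+\,\cdot\,}$ vertices, together with the finitely many exceptional boundary vertices. For each family I would identify the three incident arcs with their orientations, read the currents off \Tref{3.2}, and confirm that the sum over the arcs having that vertex as tail is $0$. The delicate points are keeping the signs straight---an edge $\{c,r\}$ contributes $+\kappa((c,r))$ at $c$ but $\kappa((r,c)) = -\kappa((c,r))$ at $r$---and correctly handling the index wrap-around modulo $4m+1$, where the generic formulas are replaced by the four special boundary values $\kappa((r_{4m+1},c_1))$, $\kappa((c_{4m+1},r_1))$, $\kappa((c_{2m+1},r_{2m+1}))$ and $\kappa((c_{4m+1},r_{4m+1}))$. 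Once KCL holds for every family, $\kappa$ is an integer-current assignment, \lref{equivalence} yields an $H(4m+1;3)$, and by the first paragraph this array is cyclically tridiagonal, proving the theorem for all $m \ge 1$.
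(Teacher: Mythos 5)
Your proposal is correct and follows essentially the same route as the paper: both obtain the array by applying \lref{equivalence} to Youngs' integer-current assignment on the M\"{o}bius ladder with $4m+1$ rungs (\Tref{3.2}), with the cyclically tridiagonal property coming from the ladder's adjacency structure. The paper is even terser than your write-up, simply asserting that the bijectivity and KCL verifications have been carried out, so your outline of those checks matches and slightly elaborates the published argument.
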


This serves as the base case for the main theorem of this subsection.

\begin{theorem} \label{ladder.4m+1}There exists an $H(n;k)$  for every $n \equiv 1$ (mod 4) (with $n \geq 5$) and every 
$k \equiv 3$ (mod 4) with $3 \leq k <n$.\end{theorem}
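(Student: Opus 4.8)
The plan is to manufacture every $H(n;k)$ in the stated range from the cyclically tridiagonal base array of Theorem~\ref{4m+1,3} by repeatedly applying the diagonal-filling operation of Lemma~\ref{add4}. Write $n = 4m+1$, and since $k \equiv 3 \pmod 4$ write $k = 3 + 4s$ for a nonnegative integer $s$. The constraint $3 \le k < n$ translates exactly into $0 \le s \le m-1$: the largest admissible $k$ is $n-2 = 4m-1$, giving $s = m-1$, while $s=0$ recovers the base case $k=3$. So it suffices to produce, for each such $s$, an $H(4m+1;\,3+4s)$.

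First I would take the cyclically tridiagonal $H(4m+1;3)$ guaranteed by Theorem~\ref{4m+1,3}. By the definition of cyclically tridiagonal, its filled cells lie exactly on the main diagonal $D_0$, on the superdiagonal together with its wrap-around cell $A(n,1)$ (which is precisely the diagonal $D_1$), and on the subdiagonal together with its wrap-around cell $A(1,n)$ (which is precisely the diagonal $D_{n-1}$). Hence exactly the three \emph{consecutive} diagonals $D_{n-1}, D_0, D_1$ are occupied, and the remaining $n-3 = 4m-2$ diagonals $D_2, D_3, \ldots, D_{n-2}$ form a single run of consecutive empty diagonals. I would then carve the first $4s$ of these into $s$ disjoint sets of four consecutive empty diagonals, for instance $\{D_{4j-2}, D_{4j-1}, D_{4j}, D_{4j+1}\}$ for $j = 1,\dots,s$. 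This is possible precisely because $4s \le 4(m-1) = 4m-4 < 4m-2$, so every index used stays inside the empty block $[2,\,n-2]$ (with two diagonals, $D_{n-3}$ and $D_{n-2}$, left to spare). Applying Lemma~\ref{add4} to these $s$ sets --- each filled by a row-shifted copy of the $H_s(n;4)$ of Theorem~\ref{th2} whose support is translated onto the next unused block of $4n$ integers --- yields $H(n;\,3+4s) = H(n;k)$, as required.

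The hard part will not be the construction but the diagonal bookkeeping, which is the only place the argument can go wrong. I must confirm that ``cyclically tridiagonal'' genuinely forces the filled cells into three \emph{consecutive} diagonals (so the empties form one long run rather than two separated pieces), and that $n-3 = 4m-2$ is just large enough to accommodate $s \le m-1$ disjoint quadruples of consecutive diagonals; the count $4m-2 = 4(m-1)+2$ shows there is exactly enough room. One should also note explicitly that the output is a genuine integer Heffter array but \emph{not} a shiftable one: the base $H(4m+1;3)$ has odd $k=3$ and so is not shiftable, so Lemma~\ref{add4} produces only an $H(n;k)$, which is precisely what the theorem claims and is consistent with Lemma~\ref{necessary} since $nk \equiv 1\cdot 3 \equiv 3 \pmod 4$. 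Beyond this verification the proof is a direct assembly requiring no ordering or simplicity hypotheses.
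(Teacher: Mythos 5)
Your proposal is correct and follows exactly the paper's route: start from the cyclically tridiagonal $H(4m+1;3)$ of Theorem \ref{4m+1,3} and apply Lemma \ref{add4} to the run of empty consecutive diagonals. The paper states this in two lines; your additional bookkeeping (that the three occupied diagonals $D_{n-1},D_0,D_1$ are consecutive, leaving a single empty run of $n-3=4m-2$ diagonals, which accommodates $s\le m-1$ disjoint quadruples) is a correct and welcome verification of what the paper leaves implicit.
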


\begin{proof} Assume $n \geq 5$ with $n \equiv 1$ (mod 4) and  $3 \leq k \leq n$.  Write 
$k=4s+3$. From Theorem \ref{4m+1,3}, there exists a cyclically tridiagonal $H(n;3)$. Apply Lemma \ref{add4} to obtain the desired $H(n;4s+3)$.
\end{proof}

In Section \ref{section4} we will use the Heffter array constructed in Theorem \ref{4m+1,3}  as an ingredient in a construction of a $H(n;5)$ for certain values of $n$.  To do so we need a property of our constructed $H(4m+1;3)$.

A {\em transversal} $T$ in an $n\times n$ array $X=X(r,c)$ is a set of $n$ non-empty cells $\{(r_1,c_1), \dots,$ $ (r_n,c_n)\}$ such that whenever $i \ne j$ we have $r_i\neq r_j$, $c_i\neq c_j$, and $X(r_i,c_i)\neq X(r_j,c_j)$. A transversal in a Heffter array $H(n;k)$ is {\em primary} if $\{|X(r_i,c_i)|\mid 1\leq i\leq n\}=\{1,\dots, n\}$.
A $H(n;k)$ array ${\cal H}$ is  {\em strippable} if there exists a primary transversal $T$  in ${\cal H}$ such that ${\cal H}\setminus T$ is shiftable; that is,  each row and  each column of ${\cal H}\setminus T$ contains $(k-1)/2$  positive integers and  $(k-1)/2$ negative integers.  

\begin{corollary} \label{4n+1.strip}There exists a strippable $H(4m+1;3)$ for all $m \geq 1$.  
\end{corollary}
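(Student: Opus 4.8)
The plan is to show that the cyclically tridiagonal $H(4m+1;3)$ of Theorem \ref{4m+1,3} is \emph{itself} strippable, with the main diagonal $T=\{(i,i)\mid 1\le i\le 4m+1\}$ serving as the primary transversal. I would verify the two requirements of strippability directly from Youngs' integer-current assignment in Table \ref{3.2}, reading the diagonal and off-diagonal entries off the table via Lemma \ref{equivalence}.

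First, that $T$ is a primary transversal. The diagonal entries $a_{i,i}$ for $i=1,\dots,2m$ come from $\kappa((c_i,r_i))=4m+1-i$ and so have absolute values $\{2m+1,\dots,4m\}$; the entries in rows $2m+2,\dots,4m$ equal the currents $\kappa((r_{2m+1+j},c_{2m+1+j}))=2m-j$ for $j=1,\dots,2m-1$ and have absolute values $\{1,\dots,2m-1\}$; and the two exceptional diagonal cells give $|a_{2m+1,2m+1}|=4m+1$ and $|a_{4m+1,4m+1}|=2m$. The union of these absolute values is exactly $\{1,\dots,4m+1\}$, so the diagonal entries are distinct (indeed distinct in absolute value) and $T$ is a primary transversal. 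For illustration, the diagonal of the $H(13;3)$ in Example \ref{h13} reads $-12,-11,-10,-9,-8,-7,-13,5,4,3,2,1,-6$, with absolute values $\{1,\dots,13\}$.

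Second, that ${\cal H}\setminus T$ is shiftable, i.e.\ that each row and each column keeps exactly one positive and one negative entry once the diagonal is deleted (so $(k-1)/2=1$ of each). The key observation is a uniform sign split: every off-diagonal filled cell lies either on the superdiagonal or on the subdiagonal (with the corner $(4m+1,1)$ grouped with the former and $(1,4m+1)$ with the latter), and these two families receive opposite signs. Each superdiagonal entry $a_{i,i+1}=\kappa((r_i,c_{i+1}))$ is one of the positive currents listed in Table \ref{3.2}, whereas each subdiagonal entry $a_{i,i-1}=-\kappa((c_{i-1},r_i))$ is the negative of a positive current; the corners $a_{4m+1,1}=12m+3>0$ and $a_{1,4m+1}=-(4m+2)<0$ follow the same pattern. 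A short index check confirms that the superdiagonal cells occupy rows $1,\dots,4m$ exactly once each (with row $4m+1$ supplying the positive corner $(4m+1,1)$), and the subdiagonal cells occupy rows $2,\dots,4m+1$ exactly once each (with row $1$ supplying the negative corner $(1,4m+1)$). Hence every row, and symmetrically every column, retains exactly one positive and one negative entry.

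This is essentially a bookkeeping argument rather than a new construction, so I expect no genuine obstacle, only the need for care: one must track the four arc-families of Table \ref{3.2} together with the handful of exceptional entries and confirm that the superdiagonal/subdiagonal sign split holds without exception. Since no entry breaks the pattern, the verification goes through for every $m\ge 1$, yielding a strippable $H(4m+1;3)$.
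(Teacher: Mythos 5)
Your proof is correct and follows the same route as the paper: the paper's own (very terse) argument likewise takes the main diagonal of the cyclically tridiagonal $H(4m+1;3)$ as the primary transversal and observes that the two remaining broken diagonals consist of all positive and all negative entries respectively. Your write-up simply carries out the verification from Youngs' table explicitly, which the paper leaves to the reader.
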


\begin{proof} The $H(4m+1;3)$ constructed from Theorem \ref{4m+1,3} has the main diagonal as a primary transversal. There are two remaining broken diagonals: one filled with positive numbers and the other with negative. Hence the array is strippable. \end{proof}

\subsection{{\em H(n;k)} with {\em n} $\equiv$ 0 (mod 4) and {\em k} $\equiv$ 3 (mod 4)} \label{sect3.2}

The proof in this case is analogous to that of the previous section. We again use an integer-current graph.

A {\em cylindrical ladder} on $4m$ {\em rungs} is a bipartite graph with vertex set $R \, \cup \, C$ where $R = \{r_i \,|\, 1 \le i \le 4m\}$ and $C = \{c_i \,|\, 1 \le i \le 4m\}$ and edge set $\{ \{r_i,c_{j}\} \, | \, 1 \le i \le 4m, j=i-1, j=i, \mbox{ or } j=i+1   \}$ (the subscripts are read modulo $4m$). Example \ref{c12} shows the cylindrical ladder on 12 rungs. 

Youngs \cite{Y} gives the following integer-current assignment to these graphs. He was interested in the entries as elements of the integers modulo $24 m + 1$, but again noted {\em KCL} holds over the integers. We have again verified this claim.

\newpage
\begin{tabl} \label {3.8} {\rm \cite{Y} }Currents on the cylindrical ladder with $4m$ rungs. \end{tabl}
\begin{center}
$\begin{array}{lll}
\kappa((r_{2i-1},c_{2i})) = 8m+1-i, & \kappa((c_{2i-1},r_{2i})) = 8m+i & i=1,\dots,m\\
\kappa((r_{2i},c_{2i+1})) = 12m-i, & \kappa((c_{2i},r_{2i+1})) =4m+1+i & i=1,\dots,m-1\\
\kappa((r_{2m-2+2i},c_{2m-1+2i})) = 5m+i, & \kappa((c_{2m-2+2i},r_{2m-1+2i})) = 11m+1-i & i=1,\dots,m\\
\kappa((r_{2m-1+2i},c_{2m+2i})) = 9m+i, & \kappa((c_{2m-1+2i},r_{2m+2i})) = 7m+1-i & i=1,\dots,m\\
\kappa((c_{i+1},r_{i+1})) = 4m-1-i &  & i=1,\dots,2m-2 \\
\kappa((r_{2m+i},c_{2m+i})) = 2m-i  && i = 1,\dots,2m-1 \\
\kappa((r_{4m},c_{1})) = 4m+1 & \kappa((c_{4m},r_{1})) = 12m \\
\kappa((r_{1},c_{1})) = 4m & \kappa((r_{2m},c_{2m})) = 4m-1\\
\kappa((r_{4m},c_{4m})) = 2m
\end{array}$
\end{center}

In Example \ref{ex1} we presented an $H(4;3)$ constructed from the cylindrical ladder on 4 rungs using the equivalence from Lemma \ref{equivalence}.  In the next examples  we give an  integer-current assignment for the cylindrical ladder on 12 rungs followed by the resulting $H(12;3)$.

\begin{example}\label{c12} An integer-current assignment for the cylindrical ladder on 12 rungs ($m = 3$). 
\end{example}


\begin{figure}[h] 
  \centering
 \includegraphics[bb=0 -1 522 104,width=6.5in,keepaspectratio]{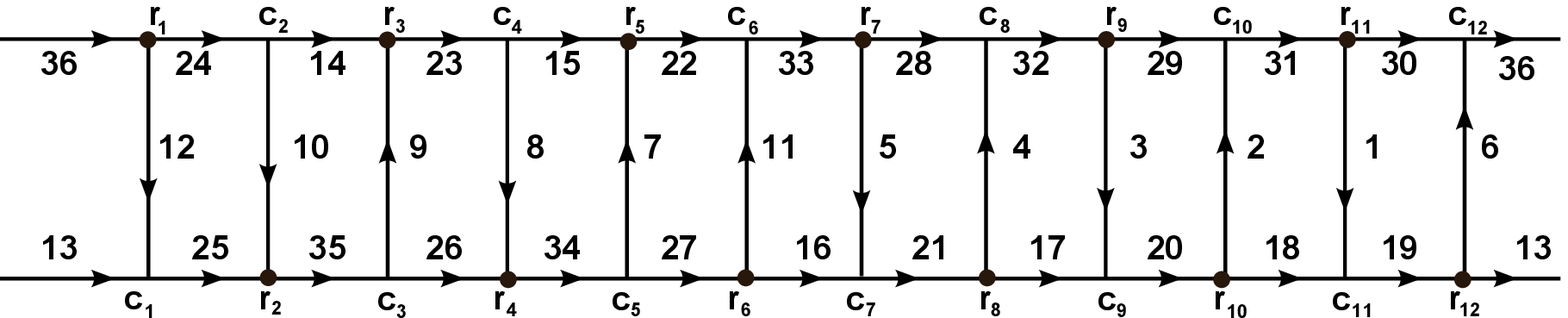}

  \label{fig:ladder12rung}
\end{figure}

\renewcommand{\tabcolsep}{2pt}
\begin{example} The $H(12;3)$ resulting from Example \ref{c12} using Lemma \ref{equivalence}. 
\label{h12}
\end{example} {\small
\begin{center}
\begin{tabular}{|c|c|c|c|c|c|c|c|c|c|c|c|} \hline
12&24&&&&&&&&&&-36 \\ \hline
-25&-10&35&&&&&&&&& \\ \hline
&-14&-9&23&&&&&&&& \\ \hline
&&-26&-8&34&&&&&&& \\ \hline
&&&-15&-7&22&&&&&& \\ \hline
&&&&-27&11&16&&&&& \\ \hline
&&&&&-33&5&28&&&& \\ \hline
&&&&&&-21&4&17&&& \\ \hline
&&&&&&&-32&3&29&& \\ \hline
&&&&&&&&-20&2&18& \\ \hline
&&&&&&&&&-31&1&30 \\ \hline
13&&&&&&&&&&-19&6 \\ \hline
\end{tabular}
\end{center}

} 

Using Lemma \ref{equivalence} and the current graphs above we get the following. 

\begin{theorem} \label{4m,3}  There exists a cyclically tridiagonal $H(4m;3)$ for all $m \geq 1$.  
\end{theorem}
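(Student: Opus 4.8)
The plan is to mirror the argument for \tref{4m+1,3} exactly, replacing the M\"obius ladder by the cylindrical ladder on $4m$ rungs and Youngs' assignment from \Tref{3.2} by the one in \Tref{3.8}. The cylindrical ladder is a $3$-regular bipartite graph of order $2\cdot 4m = 8m$, so once I confirm that the assignment $\kappa$ of \Tref{3.8} is a genuine integer-current assignment, \lref{equivalence} immediately produces an $H(4m;3)$. The concrete $H(12;3)$ of \egref{h12} is exactly the output of this procedure for $m=3$, and serves as a sanity check on the construction.

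Confirming that $\kappa$ is an integer-current assignment has two parts. First I would check that $\kappa$ is a bijection from $A(G)$ onto $S=\{\pm 1,\dots,\pm|E(G)|\}$; since $|E(G)| = 3\cdot 4m = 12m$, this amounts to verifying that the absolute values listed in \Tref{3.8} run over $\{1,\dots,12m\}$ without repetition (a short interval count, family by family), with condition (1), respecting negatives, handled automatically since each edge is recorded by a single arc. Second, and more importantly, I would verify KCL over $\mathbb{Z}$ at every vertex. The natural approach is to sort the vertices into the families indexed by $i$ that appear in \Tref{3.8}, together with the finitely many exceptional vertices $r_1, r_{2m}, r_{4m}, c_1, c_{4m}$, and for a generic $i$ add the three currents on the arcs leaving that vertex and check that the sum telescopes to $0$. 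Because each interior vertex meets exactly three rungs, two ``slanted'' currents from consecutive diagonals and one ``straight'' current, the cancellation is the same linear identity in $i$ and $m$ throughout a family, so only a bounded number of computations (one per family plus the boundary vertices) is needed.

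The cyclically tridiagonal property is essentially free from the graph structure. In \lref{equivalence} the entry $A(i,j)$ is $\kappa((r_i,c_j))$, and the edge set of the cylindrical ladder joins $r_i$ to $c_j$ only for $j\in\{i-1,i,i+1\}\pmod{4m}$. For $2\le i\le 4m-1$ none of these indices wrap, so $|i-j|\le 1$, while the wrap-around edges $\{r_1,c_{4m}\}$ and $\{r_{4m},c_1\}$ produce exactly the two off-corner entries $A(1,4m)$ and $A(4m,1)$ permitted by the definition; every other cell is empty. Hence the array is cyclically tridiagonal.

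I expect the main obstacle to be the KCL verification over $\mathbb{Z}$. The assignment in \Tref{3.8} is engineered so that the sums vanish modulo $24m+1$, and the real content is confirming that at each vertex the uncancelled integers add to $0$ exactly, rather than to some nonzero multiple of $24m+1$. This is the claim Youngs noted and the authors independently verified; reproducing it is a careful but routine case analysis over the vertex families and the handful of boundary vertices, and it is the only place where anything beyond bookkeeping is required.
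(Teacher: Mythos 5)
Your proposal matches the paper's proof exactly: the paper obtains the cyclically tridiagonal $H(4m;3)$ by applying \lref{equivalence} to the cylindrical ladder on $4m$ rungs equipped with Youngs' integer-current assignment from \Tref{3.8}, with the KCL-over-$\mathbb{Z}$ verification asserted (the authors state they have checked it) rather than written out. Your additional outline of how to verify the bijection and KCL family-by-family is consistent with, and slightly more explicit than, what the paper records.
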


It is again apparent that the  $H(4m;3)$ constructed in Theorem \ref{4m,3} is strippable.  

\begin{corollary}\label{strip-4m,3} There exists a strippable $H(4m;3)$ for all $m \geq 1$. \end{corollary}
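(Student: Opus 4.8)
The plan is to mirror the argument of Corollary \ref{4n+1.strip} exactly, now exploiting the cyclically tridiagonal structure of the $H(4m;3)$ produced by Theorem \ref{4m,3}. I would take the candidate primary transversal $T$ to be the main diagonal $\{(i,i)\mid 1\le i\le 4m\}$, and then argue that the two remaining broken diagonals --- the cyclic super-diagonal $\{(i,i+1)\}$ and the cyclic sub-diagonal $\{(i,i-1)\}$ (indices mod $4m$) --- are filled entirely with positive and entirely with negative entries, respectively.

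First I would verify that $T$ is primary, i.e.\ that $\{|X(i,i)|\mid 1\le i\le 4m\}=\{1,\dots,4m\}$. This is read directly off Table \ref{3.8}. The diagonal cells take their values from $\kappa((r_1,c_1))=4m$, $\kappa((r_{2m},c_{2m}))=4m-1$, and $\kappa((r_{4m},c_{4m}))=2m$, together with the family $\kappa((c_{i+1},r_{i+1}))=4m-1-i$ for $1\le i\le 2m-2$ (placing negative entries on the diagonal with absolute values $2m+1,\dots,4m-2$) and the family $\kappa((r_{2m+i},c_{2m+i}))=2m-i$ for $1\le i\le 2m-1$ (placing the positive entries $1,\dots,2m-1$). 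Collecting these absolute values gives precisely $\{1,2,\dots,4m\}$ with no repeats, so $T$ is a primary transversal.

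Next I would check the two off-diagonals. Each super-diagonal cell $(a,a+1)$ carries the value $\kappa((r_a,c_{a+1}))$, and the four left-hand families of Table \ref{3.8} --- $\kappa((r_{2i-1},c_{2i}))$, $\kappa((r_{2i},c_{2i+1}))$, $\kappa((r_{2m-2+2i},c_{2m-1+2i}))$, $\kappa((r_{2m-1+2i},c_{2m+2i}))$, plus the wraparound $\kappa((r_{4m},c_1))=4m+1$ --- supply exactly these currents and are all positive; hence the super-diagonal is entirely positive. Dually, each sub-diagonal cell $(a+1,a)$ carries $\kappa((r_{a+1},c_a))=-\kappa((c_a,r_{a+1}))$, and the matching right-hand families (together with $\kappa((c_{4m},r_1))=12m$) are all positive, so the sub-diagonal is entirely negative.

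With these three facts the conclusion is immediate: deleting $T$ removes one cell from each row and each column, leaving in each row and column exactly one super-diagonal entry (positive) and one sub-diagonal entry (negative). Since $(k-1)/2=1$ here, $\mathcal{H}\setminus T$ has one positive and one negative integer in every row and column, so it is shiftable and $\mathcal{H}$ is strippable. The only genuine work is the bookkeeping of the first step --- confirming that the piecewise formulas of Table \ref{3.8} tile $\{1,\dots,4m\}$ across the main diagonal with neither repetition nor omission --- but this is exactly the index-chasing already witnessed on the $H(12;3)$ of Example \ref{h12}, whose diagonal reads $12,-10,-9,-8,-7,11,5,4,3,2,1,6$ with absolute values $\{1,\dots,12\}$.
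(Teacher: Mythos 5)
Your proof is correct and follows exactly the route the paper intends: the paper gives no explicit argument here beyond asserting that strippability is ``again apparent,'' by analogy with Corollary~\ref{4n+1.strip} (main diagonal is a primary transversal; the remaining two broken diagonals are one all-positive and one all-negative). Your index-chasing through Table~\ref{3.8} simply supplies the verification the paper leaves to the reader, and it checks out.
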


The proof that this result extends to the case $k \equiv 3 \pmod 4$ is exactly the same as in 
Theorem \ref{ladder.4m+1} using the fact that the $H(4m;3)$ is cyclically tridiagonal. 

\begin{theorem} \label{ladder.4m}There exists an $H(n;k)$  for every $n \equiv 0$ (mod 4) and every 
$k \equiv 3$ (mod 4) with $3 \leq k < n$.\end{theorem}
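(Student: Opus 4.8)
The plan is to follow the proof of Theorem~\ref{ladder.4m+1} essentially verbatim, merely replacing the M\"{o}bius-ladder base array by the cylindrical-ladder base array of the previous subsection. First I would fix $n \equiv 0 \pmod 4$, write $n = 4m$, and express the target as $k = 4s+3$ with $s \ge 0$; the hypothesis $3 \le k < n$ then becomes $4s + 3 < n$, i.e.\ $4s \le n-4$. The base ingredient is the cyclically tridiagonal $H(4m;3)$ furnished by Theorem~\ref{4m,3}.

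The crux is that ``cyclically tridiagonal'' is exactly the emptiness pattern that Lemma~\ref{add4} requires. In the diagonal notation $D_0,\dots,D_{n-1}$, the filled cells of the base array lie only in $D_0$ (the main diagonal), in $D_1$ (the superdiagonal $j=i+1$ together with the corner $(n-1,0)$), and in $D_{n-1}$ (the subdiagonal $j=i-1$ together with the corner $(0,n-1)$). These three diagonals $D_{n-1},D_0,D_1$ are consecutive, so the remaining $n-3$ diagonals $D_2,D_3,\dots,D_{n-2}$ are simultaneously empty and consecutive.

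Next I would peel off $s$ disjoint blocks of four consecutive empty diagonals from this run, namely $\{D_2,D_3,D_4,D_5\},\{D_6,\dots,D_9\},\dots,\{D_{4s-2},\dots,D_{4s+1}\}$. The top index used is $4s+1$, and $4s \le n-4$ gives $4s+1 \le n-3 \le n-2$, so every block sits inside the empty run $D_2,\dots,D_{n-2}$ and the blocks are pairwise disjoint. Feeding these $s$ sets to Lemma~\ref{add4} adjoins $4s$ filled cells to each row and column and yields an $H(n;3+4s)=H(n;k)$, as desired.

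There is no genuinely hard step; the entire content is the diagonal bookkeeping above, and the one thing to watch is that it is the strict inequality $k<n$ (not $k \le n$) that frees up enough empty diagonals for all $s$ blocks. As a sanity check I would verify the extreme case $k=n-1$ (so $s=m-1$): the final block is then $\{D_{n-6},\dots,D_{n-3}\}$, which indeed lies strictly below the occupied diagonal $D_{n-1}$.
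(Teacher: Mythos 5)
Your proposal is correct and is exactly the paper's argument: the paper proves this theorem by remarking that the proof of Theorem~\ref{ladder.4m+1} carries over verbatim, using the cyclically tridiagonal $H(4m;3)$ of Theorem~\ref{4m,3} as the base and applying Lemma~\ref{add4} to the remaining consecutive empty diagonals. Your diagonal bookkeeping just makes explicit what the paper leaves implicit.
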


We summarize the results of Section \ref{section3} for future reference.

\begin{theorem} \label{k=3mod4} There exists an $H(n;k)$ with $k \equiv 3$ (mod 4) if and only if  $n \geq 4$ and $n \equiv 0,1$ (mod 4). \end{theorem}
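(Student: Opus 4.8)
The plan is to assemble this summary theorem directly from the two construction theorems of this section together with the counting restriction of Lemma \ref{necessary}, so that almost all of the work is bookkeeping of boundary cases rather than new construction.

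For sufficiency I would split on the residue of $n$ modulo $4$. When $n \equiv 1 \pmod{4}$ (so $n \ge 5$), Theorem \ref{ladder.4m+1} already produces an $H(n;k)$ for every $k \equiv 3 \pmod{4}$ with $3 \le k < n$; the base case $n=5$ comes from Theorem \ref{4m+1,3} with $m=1$. When $n \equiv 0 \pmod{4}$ (so $n \ge 4$), Theorem \ref{ladder.4m} does the same, with base case $n=4$ from Theorem \ref{4m,3} at $m=1$. Writing $k=4s+3$, these two theorems cover all admissible $s$, so every pair $(n,k)$ permitted by the statement is realized.

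For necessity I would invoke Lemma \ref{necessary}: any $H(n;k)$ forces $nk \equiv 0,3 \pmod{4}$. Substituting $k \equiv 3 \pmod{4}$ gives $nk \equiv 3n \pmod{4}$, so I need $3n \equiv 0$ or $3n \equiv 3 \pmod{4}$. Checking the four residues of $n$ shows that $n \equiv 0$ gives $3n \equiv 0$ and $n \equiv 1$ gives $3n \equiv 3$ (both allowed), whereas $n \equiv 2$ gives $3n \equiv 2$ and $n \equiv 3$ gives $3n \equiv 1$ (both forbidden); hence $n \equiv 0,1 \pmod{4}$ is forced. Finally, since each of the $n$ rows contains $k \ge 3$ filled cells we need $n \ge k \ge 3$, and the residue condition already excludes $n=3$, so $n \ge 4$.

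The only genuine subtlety, and the point I would be most careful to state explicitly, is the interaction between the hypothesis $3 \le k < n$ carried by Theorems \ref{ladder.4m+1} and \ref{ladder.4m} and the bare phrasing of the summary theorem, which suppresses the range of $k$. I would therefore read the claim as: for each admissible $n$ an $H(n;k)$ exists for every $k \equiv 3 \pmod{4}$ with $3 \le k < n$, and conversely the existence of any such array forces $n \equiv 0,1 \pmod{4}$ and $n \ge 4$. With that understood, the proof is a one-line combination of the cited results once the edge cases $n=4,5$ and $k=3$ are pinned down.
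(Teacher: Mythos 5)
Your proposal is correct and matches the paper's intent exactly: the paper gives no explicit proof for this summary theorem, but it is meant to follow precisely from combining Theorem \ref{ladder.4m+1}, Theorem \ref{ladder.4m}, and Lemma \ref{necessary}, which is what you do. Your explicit remark about the suppressed range $3 \le k < n$ is a fair reading of a genuine imprecision in the statement, not a gap in your argument.
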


\section {H({\em n};{\em k}) with {\em k} $\equiv$ 1 (mod 4)}  \label{section4}

By Lemma \ref{necessary}, in order for an $H(n;k)$ with $k \equiv 1$ (mod 4) to exist it  is necessary that $n\equiv 0$ or 3 modulo 4. We consider these two cases in two subsections.

\subsection{{\em H(n;k)} with {\em n} $\equiv$ 0 (mod 4) and {\em k} $\equiv$ 1 (mod 4)} 
\label{subsection4.1}

The smallest example in this case is an $H(8;5)$.  It is displayed below.

\begin{example} An $H(8;5)$.
\end{example}
\begin{center}
\begin{tabular}{|r|r|r|r|r|r|r|r|} \hline
13&$-$20&19&$-$11&$-$1&&& \\ \hline
$-$14&16&18&$-$22&&2&& \\ \hline
15&$-$12&$-$23&17&&&3& \\ \hline
$-$9&10&$-$21&24&&&&$-$4 \\ \hline
$-$5&&&&29&$-$30&31&$-$25 \\ \hline
&6&&&$-$36&32&$-$28&26 \\ \hline
&&7&&35&34&$-$39&$-$37 \\ \hline
&&&$-$8&$-$27&$-$38&33&40 \\ \hline
\end{tabular}
\end{center}
\medskip

For our next result we will use a collection of $4 \times 4$ arrays called $B(a,b)$ which satisfy the following properties:

\renewcommand{\tabcolsep}{6pt} 

\begin{tabular}{ll}
1.&the support of $B(a,b)$  is  $\{ 1,2, \ldots ,16 \}$,\\
2. &the sum of the elements in row 1 and row 4  is $a$,\\
3.& the sum of the elements in row 2 and row 3  is $-a$,\\
4.& the sum of the elements in column 1 and column 4  is $b$,\\
5.& the sum of the elements in column 2 and column 3  is $-b$,\\
6.& each row and each column contains  exactly two positive and two negative entries \\
& (i.e. $B(a,b)$ is shiftable). \\
 
\end{tabular}

\renewcommand{\tabcolsep}{2pt} 

 \begin{lemma} \label{boosters} There exist arrays $B(a,b)$ for  all $a,b \in \{0,4,8,12\}$.
\end{lemma}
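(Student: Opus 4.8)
The plan is to reduce the sixteen required arrays to a handful of genuinely different constructions and then to build each one from four $2\times 2$ blocks. First I would use the transpose: if $B$ satisfies the conditions for $(a,b)$, then $B^{\mathsf T}$ interchanges the roles of rows and columns and hence realizes $(b,a)$, while preserving the support and the property of having two positive and two negative entries in each line. So it suffices to treat the ten cases with $a\le b$. I would also note that permuting the two rows within the pair $\{1,4\}$ (and within $\{2,3\}$), and likewise the columns, preserves all six defining properties, so the only data that matter are the four \emph{pair-sums}.

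Next I would reorder the rows into the cyclic order $1,4,2,3$ and the columns into $1,4,2,3$, which makes the paired rows (and paired columns) adjacent. Cutting the reordered $4\times 4$ array into its four $2\times 2$ quadrants $\mathrm{TL},\mathrm{TR},\mathrm{BL},\mathrm{BR}$ with respective entry-sums $p,q,r,s$, conditions 2--5 become
\begin{gather*}
p+q=a,\qquad r+s=-a,\\
p+r=b,\qquad q+s=-b,
\end{gather*}
a consistent system (the four right-hand sides sum to $0$) with one free parameter: choosing $p$ forces $q=a-p$, $r=b-p$, and $s=p-a-b$. I would give each quadrant a \emph{diagonal} sign pattern, i.e.\ one positive and one negative entry in each of its two rows and two columns. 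Since every displayed row is the concatenation of two block-rows, each carrying exactly one positive entry, every row (and, symmetrically, every column) of the whole array then has exactly two positive and two negative entries; thus the array is automatically shiftable (condition 6). Filling the quadrants with a partition of $\{1,\dots ,16\}$ into four quadruples makes the support exactly $\{1,\dots ,16\}$ (condition 1) for free.

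What remains is purely arithmetic: for each target $(a,b)$ choose $p$ and partition $\{1,\dots,16\}$ into four quadruples, together with a choice in each quadruple of which two entries are positive, so that the four block-sums equal $p,\,a-p,\,b-p,\,p-a-b$. Because $a,b\equiv 0\pmod 4$ I would take $p$ even and split the eight odd and the eight even numbers two-and-two into each block, so that every block-sum is even and the targets are of the right parity.

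\emph{The main obstacle is exactly this simultaneous partition-and-sign problem}: the four block-sums are not independent, since the numbers placed in three blocks determine what is left for the fourth, so a careless greedy choice can strand the last quadruple with an unrealizable sum (for instance, a leftover set such as $\{5,6,15,16\}$ can only produce the block-sums $-20,-2,0,2,20$). I would overcome this by selecting $p$ so that all four targets lie in a moderate range and by drawing the quadruples from matched large/small pairs, so that each block's attainable range straddles its target; a short finite check over the ten reduced cases then produces explicit arrays. Their six properties are verified directly: conditions 1 and 6 hold by the construction, and conditions 2--5 are exactly the block-sum equations above.
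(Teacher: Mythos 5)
Your transpose reduction is the same one the paper uses, and organizing the remaining search is a reasonable instinct, but your translation of conditions 2--5 into four quadrant-sum equations is not faithful to what the lemma actually requires, and this is where the argument breaks. As the displayed arrays and the application in Theorem \ref{from-boosters} make clear, ``the sum of the elements in row 1 and row 4 is $a$'' means that row 1 and row 4 \emph{each individually} sum to $a$: the four row sums of $B(a,b)$ must be $a,-a,-a,a$ and the four column sums $b,-b,-b,b$ (this is exactly what is needed so that concatenating $B_i(a_1,b_1),\dots,B_i(a_t,b_t)$ yields row sums $4i,-4i,-4i,4i$ in Theorem \ref{from-boosters}; it is also what the explicit arrays satisfy, e.g.\ in $B(0,4)$ each of columns 1 and 4 separately sums to $4$). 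Your quadrant decomposition only controls the sum of the two rows of a block-pair taken together: the equations $p+q=a$, $r+s=-a$, $p+r=b$, $q+s=-b$ say nothing about how each quadrant's total splits between its two rows (or columns), so an array built to satisfy them need not have the prescribed individual line sums --- and even the combined sums come out wrong by a factor of two under the intended reading, since rows 1 and 4 together must sum to $2a$, not $a$. The true constraint system consists of eight line-sum conditions, not four, so your ``one free parameter'' count and the accompanying parity discussion do not apply to the actual problem.

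There is a second, smaller gap: even granting your setup, the lemma is a pure existence statement about finitely many specific objects, and you never exhibit them; the ``short finite check'' that is supposed to produce the explicit arrays is precisely the content of the lemma and is left unperformed. The paper's proof simply writes down the ten arrays with $a\le b$ and invokes the transpose for the rest, with verification by direct inspection; a correct proof along your lines would have to restore the per-row and per-column conditions and then actually display the arrays (or give a construction whose verification is genuinely mechanical).
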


\begin{proof}  We will give each of these arrays explicitly. First note that if a $B(a,b)$ exists, then its transpose is a $B(b,a)$.  Hence below we will list the necessary arrays when $ a\leq b$.

\renewcommand{\arraycolsep}{2pt}
$$
\begin{array}{|c|c|c|c|}
\multicolumn{4}{c} {B(0,0)} \\
\hline
1&-2&-3&4   \\ \hline
-5&6&7&-8\\ \hline
-9&10&11&-12\\ \hline
13&-14&-15&16\\ \hline
 \end {array} \hspace{.2in}
\begin{array}{|c|c|c|c|}
\multicolumn{4}{c} {B(0,4)} \\
\hline
-1&5&9&-13\\ \hline
-2&6&10&-14\\ \hline
3&-7&-11&15\\ \hline
4&-8&-12&16\\ \hline
\end{array} \hspace{.2in}
\begin{array}{|c|c|c|c|}
\multicolumn{4}{c} {B(0,8)} \\
\hline
-1&2&3&-4\\ \hline
5&-6&-7&8\\ \hline
-9&10&11&-12\\ \hline
13&-14&-15&16\\ \hline
\end{array}\hspace{.2in}
\begin{array}{|c|c|c|c|}
\multicolumn{4}{c} {B(0,12)} \\
\hline
-2&4&6&-8\\ \hline
10&-12&-14&16\\ \hline
-1&3&9&-11\\ \hline
5&-7&-13&15\\ \hline
\end{array} 
$$
$$
\begin{array}{|c|c|c|c|}
\multicolumn{4}{c} {B(4,4)} \\
\hline
9&-1&2&-6\\ \hline
-13&5&-10&14\\ \hline
15&-11&8&-16\\ \hline
-7&3&-4&12\\ \hline
\end{array} \hspace{.2in}
\begin{array}{|c|c|c|c|}
\multicolumn{4}{c} {B(4,8)} \\
\hline
-6&8&4&-2\\ \hline
10&-12&-16&14\\ \hline
5&-7&13&-15\\ \hline
-1&3&-9&11\\ \hline
\end{array} \hspace{.2in}
\begin{array}{|c|c|c|c|}
\multicolumn{4}{c} {B(4,12)} \\
\hline
-1&9&2&-6\\ \hline
5&-13&-10&14\\ \hline
-3&7&4&-12\\ \hline
11&-15&-8&16\\ \hline
\end{array}  \hspace{.2in}
\begin{array}{|c|c|c|c|}
\multicolumn{4}{c} {B(8,8)} \\
\hline
-1&5&-9&13\\ \hline
10&-14&2&-6\\ \hline
11&-7&3&-15\\ \hline
-12&8&-4&16\\ \hline
\end{array} 
$$
$$
\begin{array}{|c|c|c|c|}
\multicolumn{4}{c} {B(8,12)} \\
\hline
-2&6&8&-4\\ \hline
10&-16&-14&12\\ \hline
-5&1&7&-11\\ \hline
9&-3&-13&15\\ \hline
\end{array} \hspace{.2in}
\begin{array}{|c|c|c|c|}
\multicolumn{4}{c} {B(12,12)} \\
\hline
13&-5&-10&14\\ \hline
-9&1&2&-6\\ \hline
-7&3&4&-12\\ \hline
15&-11&-8&16\\ \hline
\end{array} 
$$  \end{proof}


\begin{theorem} \label{from-boosters} There exists an $H(n;k)$ for every $n \geq 8$ where $n \equiv 0$ (mod 4), 
$k \equiv 1$ (mod 4) and   $ 4  \lceil{(n-4)/12}\rceil +5 \leq k <n$ .\end{theorem}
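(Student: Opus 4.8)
The plan is to realize $H(n;k)$ with $n=4w$ and $k=4j+1$ as a block array: partition the $n\times n$ grid into a $w\times w$ array of $4\times 4$ cells, fill a banded set of $j$ block-diagonals with $4\times 4$ ingredients, and thread a single connecting transversal (a broken diagonal carrying one entry per row and column) through the empty part. The two kinds of ingredients are the booster blocks $B(a,b)$ from Lemma \ref{boosters} and a small family of special shiftable $4\times 4$ blocks modeled on the top-left $4\times 4$ corner of the $H(8;5)$ above, whose row sums are $1,-2,-3,4$ and column sums $5,-6,-7,8$, i.e.\ not multiples of $4$.

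First I would fix the arithmetic of the connecting transversal. If the entry in the connector cell of row $i$ (= column $\sigma(i)$) is $v$, then forcing that row and that column to sum to $0$ requires $v=-(\text{sum of the block entries in row }i)=-(\text{sum of the block entries in column }\sigma(i))$. Thus the block-part row sums and column sums must (a) agree under some matching $\sigma$, and (b) their negatives must run exactly through $\{\pm 1,\pm 2,\dots,\pm n\}$, so that the connector has support $\{1,\dots,n\}$. The role of the ingredients is now transparent: the special blocks supply the residues modulo $4$ (their line sums are small and not multiples of $4$), while the boosters $B(a,b)$, whose row sums are $\pm a$ and column sums $\pm b$ with $a,b\in\{0,4,8,12\}$, supply magnitude in steps of $4$. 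Placing one block-diagonal of special blocks together with $j-1$ booster diagonals, and choosing the parameters $a,b$ and the successive shifts (using $B(a,b)\pm x$, which preserves all sums and moves the support to $\{1+x,\dots,16+x\}$), I would arrange the block-part line sums to hit each value of $\{1,\dots,n\}$ exactly once. Since a single booster contributes at most $12$ to a line sum and the one special diagonal at most $4$, while line sums up to $n$ must be reached, the number of booster diagonals $j-1$ must satisfy $12(j-1)+4\ge n$, i.e.\ $j-1\ge\lceil (n-4)/12\rceil$; this forces $k=4j+1\ge 4\lceil (n-4)/12\rceil+5$ and pins down the base array $H(n;k_0)$ with $k_0=4\lceil (n-4)/12\rceil+5$.

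To obtain the whole range of $k$, I would then increase $k$ in steps of $4$. The cleanest route is Lemma \ref{add4}: the banded base array occupies only $j$ of the $w$ block-diagonals, so it leaves a block of consecutive empty diagonals, and each application of Lemma \ref{add4} (adding a shifted $H_s(n;4)$ from Theorem \ref{th2}, equivalently one more $B(0,0)$-type booster diagonal) raises $k$ by $4$ without disturbing any sum. Repeating this until the band would fill the array gives $H(n;k)$ for every $k\equiv 1\pmod 4$ with $k_0\le k\le n-3$; since $n\equiv 0\pmod 4$ the condition $k<n$ is equivalent to $k\le n-3$, which is the stated range. I would finish by verifying that the supports of the special layer, the booster layers, and the connector partition $\{1,\dots,nk\}$, which follows from assigning the blocks disjoint shift ranges together with the connector's support $\{1,\dots,n\}$.

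The hard part will be the bookkeeping in the second paragraph: simultaneously (i) making the block-part row sums and column sums coincide cell-by-cell along one transversal $\sigma$, (ii) making their negatives sweep out $\{1,\dots,n\}$ exactly once, and (iii) keeping the shift ranges of all $4\times 4$ ingredients disjoint so the total support is $\{1,\dots,nk\}$. The awkward point is the wrap-around (the ``seam'') of the circulant band, where the running row and column sums must still close up to the correct residue and magnitude; this is precisely where the boosters $B(a,b)$ with nonzero $a,b$ from Lemma \ref{boosters} are needed, rather than $B(0,0)$ alone, and where the extremal count $\lceil (n-4)/12\rceil$ is tight.
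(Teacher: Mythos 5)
Your plan is essentially the paper's proof: the paper likewise tiles the $n\times n$ array with $4\times 4$ blocks on block-diagonals, using a main-diagonal transversal of singletons $\mp(1+4i),\pm(2+4i),\pm(3+4i),\mp(4+4i)$ (support $\{1,\dots,n\}$), one block-diagonal of a fixed shiftable block with row and column sums $1,-2,-3,4$, and $t=\lceil (n-4)/12\rceil$ block-diagonals of boosters $B(a,b)$, then reaches larger $k$ by inserting extra $B(0,0)$'s (your Lemma \ref{add4} route is an equivalent way to do this last step). The bookkeeping you defer turns out to be immediate in this arrangement: the line sums that the boosters must supply in block row $i$ and in block column $i$ are all equal to $4i\le n-4$, and since the row parameter $a$ and column parameter $b$ of each booster are independent one simply chooses, separately for every block row and every block column, $t$ values from $\{0,4,8,12\}$ summing to $4i$ --- so the matching $\sigma$ is just the main diagonal and there is no wrap-around condition to close.
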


\renewcommand{\arraycolsep}{2pt}
\begin{proof} Let $m = n/4$. Define $$A = 
\begin{array}{|c|c|c|c|} \hline
-1&4&-11&9\\ \hline
-6&8&10&-14\\ \hline
3&-12&13&-7\\ \hline
5&-2&-15&16\\ \hline
\end{array} \hspace{.2in} \renewcommand{\arraycolsep}{2pt}
{\rm and} \hspace{.2in} 
Z_i = \begin{array}{|c|c|c|c|} \hline
-1-4i&&&\\ \hline
&2+4i&&\\ \hline
&&3+4i&\\ \hline
&&&-4-4i\\ \hline
\end{array} 
$$ 

\noindent
for $i= 0,1,\ldots ,n-1$.
Note that $A$ is shiftable and that in $A$, the sum of the elements in row $t$ (and also in column $t$) for $t = 1,2,3,4$ is  $1,-2,-3,4$, respectively.

We begin with an empty $ m \times m$ array (indexed by $0,1, \ldots , m-1$) called $H$.    For $i= 0,1,\ldots ,m-1$ place the array $Z_i$ in cell $(i,i)$ of $H$ and place the arrays $A \pm (n+16i)$ in the cells  $(i,i+1)$ (arithmetic on the rows and columns is modulo $m$).  
So $H$ has 5 filled cells in each row and each column.  
The support of $H$ is $\{1,2, \ldots ,5n\}$ since the support of the diagonal cells is $\{ 1,2, \ldots ,4m=n\}$  and the support of the other cells is $\{n+1, n+2, \ldots ,n + 16m = 5n\}$.

We see that in the four rows created from row $i$ of $H$ the row sums are now $-4i,4i,4i,-4i$.  Similarly, in the four columns created from column $i$ of $H$ the column sums are  $-4i,4i,4i,-4i$. 
This implies that the maximum sum in any row is $4\times (m-1) = 4m-4 =n-4$.   Let $t = \lceil{(n-4)/12}\rceil$ and note that if $0\leq 4s \leq n-4$, then one can write $4s$ as the sum of exactly $t$ values from the set $\{0,4,8,12\}$.  

We will now place shifts of the $B(a,b)$ arrays from Lemma \ref{boosters} in such a way that each row and column sum is zero.
Consider row $i$ of the array $H$.  We will place shifts of the $B(a,b)$ arrays in the cells $(i,i+2),  (i,i+3), \ldots , (i,i +t + 1)$ (so  there will be  $t$ of the $B(a,b)$ arrays in every row and every column).   Label the $B(a,b)$ arrays in this row as  $B_i(a_1,b_1), B_i(a_2,b_2), \ldots ,B_i(a_t,b_t)$.  Now choose the $a_i$'s  from $\{0,4,8,12\}$ so that the sum of the $a_i$'s is $4i$.  This is easy to do and any choice will work (Example \ref{boosterex} below shows one way this can be done).   Now do this for every row.    Do the same process in each column by choosing the appropriate values for the $b_i$'s.  Note the $a_i$'s and the $b_i$'s are independent and that all the necessary arrays $B(a,b)$ exist by Lemma \ref{boosters}.   

It is easy to see that the number of filled cells in each row and in each column is $1 + 4 + 4t
= 4t+5 = 4  \lceil{(n-4)/12}\rceil +5$.  Let $k' = 4  \lceil{(n-4)/12}\rceil +5$.
Finally, since all of the $B(a,b)$ arrays are shiftable, we just shift each one so that  no two are on the same point set and so that the support of the entire final array is $\{1,2, \ldots , nk'\}$.

All that needs to be checked now is that all of the row and column sums are equal to zero.  Consider the sums of the elements in the four new rows created from row $i$ of $H$.  They are the four row sums of the  array $Z_i+ (A \pm (n+16i)) +  B_i(a_1,b_1)+ B_i(a_2,b_2)+ \ldots + B_i(a_t,b_t)$.  But as noted above, the four row sums of 
$Z_i+ (A \pm (n+16i))$ are $-4i,4i,4i,-4i$.  By construction, since the sum $a_1+a_2+ \ldots +  a_t$  equals $4i$, then the four row sums of  $B_i(a_1,b_1)+ B_i(a_2,b_2)+ \ldots + B_i(a_t,b_t)$ are $4i,-4i,-4i,4i$.  Hence the sum of the elements in every row in the resulting final array is zero. This is similarly true for each column.  Hence we have constructed a Heffter array $H(n;k')$, when $k' = 4  \lceil{(n-4)/12}\rceil +5$.  

Let   $k \equiv 1$ (mod 4) with $k'\leq k < n$, then $k = 4s + k'$ for some $s$.  To construct an $H(n;k)$ in this case, begin with the $H(n;k')$ constructed above. Then  in exactly $s$ cells in each row and $s$ cells in each column of the original $H$ array (which are not filled in the above construction) place appropriate shifts of the $B(0,0)$ from Lemma \ref{boosters} so that each symbol from 1 to $nk$ is covered exactly once.    Since the row sums and column sums of any shift of $B(0,0)$ is 0, the resulting array still has row and column sums all equal to zero and now the number of filled cells per row and column is $k$.  This completes the proof.
\end{proof}

The following is an example of the previous theorem.

\renewcommand{\arraycolsep}{.5pt}
\renewcommand{\arraystretch}{2.5}

\begin{example}\label{boosterex} An H(28;13) constructed using Theorem \ref{from-boosters}.
\end{example}

\noindent
In this example,  $k' = 4  \lceil{(n-4)/12}\rceil +5 = 13$.

\renewcommand{\arraycolsep}{2pt}
\renewcommand{\arraystretch}{1.4}

\begin{center}
$\footnotesize 
\begin{array}{|c|c|c|c|c|c|c|}\hline
Z_0&A\pm28&B(0,0)\pm140&B(0,12)\pm156& &&\\ \hline
&Z_1&A\pm44&B(0,0)\pm172&B(4,4)\pm188& &\\ \hline
&&Z_2&A\pm60&B(0,12)\pm204&B(8,8)\pm220& \\ \hline
 &&&Z_3&A\pm76&B(0,12)\pm236&B(12,12)\pm252\\ \hline
B(4,0)\pm284& &&&Z_4&A\pm92&B(12,12)\pm268\\ \hline
B(12,0)\pm300&B(8,4)\pm316&&&&Z_5&A\pm108\\ \hline
A\pm124&B(12,0)\pm332&B(12,8)\pm348&&&&Z_6\\ \hline
\end{array}$
\end{center}
\renewcommand{\arraystretch}{1}

\bigskip
Note that Theorem \ref{from-boosters} constructs $H(n;k)$  in all of the cases roughly when $k > n/3$ ($k \equiv 1$ (mod 4)) for all $n\equiv 0$ (mod 4).
In the cases when  $n \equiv 0$ (mod 12), $n \equiv 0$ (mod 16) and $n \equiv 4$ (mod 16) we can do better than Theorem \ref{from-boosters}. These constructions are presented next.


A set of $n$ $u\times u$ arrays $A_i$, for $i=1,\dots, n$,  is called a {\em set of n $(u;v)$-filler arrays} if they satisfy the following properties:

\begin{enumerate}
  \item for each $i=1,\dots, n$, each row and each column of $A_i$ contains $v$ filled cells,
\item the support of $\cup_{i =1}^n A_i = \{1,\dots,nuv\}$,
\item for each $i=1,\dots, n$, the sum of the elements of each row of $A_i$ is $i$, and
\item for each $i=1,\dots, n$, the sum of the elements of each column of $A_i$ is $i$.
\end{enumerate}

\begin{lemma} \label{strip-lemma} Assume that there exists a strippable Heffter array $H(n;2k+1)$ and a set of  $n$ 
$(u;v)-$filler arrays $A_i$, $i=1,\dots, n$. 
Then there exists a Heffter array $H(nu;v+2k)$.
\end{lemma}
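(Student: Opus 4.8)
The plan is to build the $H(nu; v+2k)$ by expanding each cell of a strippable $H(n;2k+1)$ into a $u \times u$ block, using the filler arrays $A_i$ to encode the ``leftover'' diagonal entries that the stripping removes. First I would invoke strippability: by hypothesis there is a primary transversal $T$ in the strippable array $\mathcal{H} = H(n;2k+1)$ with $\{|\mathcal{H}(r_i,c_i)| : 1 \le i \le n\} = \{1,\dots,n\}$, and $\mathcal{H} \setminus T$ is shiftable, so each row and column of $\mathcal{H}\setminus T$ has exactly $k$ positive and $k$ negative entries. Relabel so that the transversal cell carrying the entry of absolute value $i$ is the one I will inflate with the filler array $A_i$.

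\medskip
\textbf{Inflating the cells.}
I would replace each $u\times u$ ``cell'' of an $nu \times nu$ array as follows. For each nontransversal filled cell of $\mathcal{H}$ holding value $x$, place a shifted copy of the single-entry pattern $Z$-style block (a $u\times u$ array with the value $x$ placed on its diagonal positions appropriately, or more simply a scaled diagonal placement) so that it contributes $x$ to exactly $u$ row sums and $u$ column sums, one per inflated row and column. The cleaner route, matching the filler definition, is: since $\mathcal{H}\setminus T$ is shiftable, I apply the shift operation $\pm x$ so its support can be slid up to occupy $\{n+1,\dots\}$ without changing any row or column sum; each such entry, spread diagonally across a $u\times u$ block, contributes a fixed constant to each of the $u$ inflated rows/columns. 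For each transversal cell carrying absolute value $i$, I insert a suitably shifted copy of the filler array $A_i$ (if $\mathcal{H}(r_i,c_i)<0$, I use $-A_i$, i.e.\ the negated filler, whose row and column sums are $-i$). By property 3 and 4 of the filler arrays, $A_i$ contributes $i$ to each of its $u$ row sums and $u$ column sums, exactly matching the diagonal contribution that was stripped out.

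\medskip
\textbf{Verifying the Heffter conditions.}
I would then check the three requirements. Filled cells per row: each inflated row meets the $k$ nontransversal entries of its $\mathcal{H}$-row (each contributing one filled cell in that inflated row after the diagonal spread) plus the $v$ filled cells coming from the single filler block $A_i$ on that inflated row, giving $v + 2k$ after accounting for the symmetric spread of the nontransversal entries across rows; the column count is identical by symmetry. Row/column sums: the sum along an inflated row equals (sum of the shifted nontransversal $\mathcal{H}$-entries in the corresponding $\mathcal{H}$-row) plus $\pm i$ from the filler. Since the original $\mathcal{H}$-row summed to $0$ and the filler restores exactly the stripped transversal value $\pm i$, the total is $0$. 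Support: the nontransversal entries, suitably shifted, together with the $n$ filler arrays whose union has support $\{1,\dots,nuv\}$ (property 2), are arranged by further shifting so that the combined support is precisely $\{1,\dots, nu(v+2k)\}$ with no repeated $\{x,-x\}$ pair.

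\medskip
\textbf{Main obstacle.}
The delicate point, and the step I expect to be hardest, is the bookkeeping that reconciles the two support ranges so that the final array uses each of $\{1,\dots,nu(v+2k)\}$ exactly once. The filler arrays are designed to carry the low symbols $\{1,\dots,nuv\}$, while the shiftable part $\mathcal{H}\setminus T$ must be inflated and shifted into the disjoint top range; I would need to verify that the shift parameter applied to each spread nontransversal entry can be chosen to tile the interval $\{nuv+1,\dots,nu(v+2k)\}$ bijectively, and simultaneously that spreading a single integer $x$ across a diagonal of a $u\times u$ block is compatible with keeping all distinct absolute values and preserving shiftability. Confirming this packing—and that the sign choices $\pm A_i$ never create a collision $\{x,-x\}$—is the crux; the sum conditions themselves follow almost immediately from properties 3 and 4 of the filler arrays together with strippability.
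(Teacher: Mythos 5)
Your overall plan is the paper's: inflate each cell of the strippable $H(n;2k+1)$ into a $u\times u$ block, put $\pm A_i$ on the primary transversal so that filler properties 3 and 4 restore the stripped diagonal value, and spread each nontransversal entry along the diagonal of its block. However, you leave the one genuinely load-bearing construction as an unresolved ``obstacle,'' and your description of it contains a tension you never resolve. You say each spread nontransversal entry ``contributes a fixed constant to each of the $u$ inflated rows/columns''; if that were literally true you would be placing the same absolute value $u$ times and the support condition would fail. What must actually happen is that the $u$ diagonal copies carry \emph{distinct} values, so the contributions to the $u$ inflated rows differ from row to row, and then zero row sums do not follow merely from ``the original row summed to $0$ and the filler restores the stripped value.''

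The paper resolves this with an explicit choice: writing $x=nuv$, for the cell of ${\cal H}\setminus T$ carrying $\pm(n+i)$ it uses the $u\times u$ diagonal block $D_{n+i}$ with $(j,j)$ entry $x+(j-1)\cdot 2nk+i$, negated when the sign in ${\cal H}$ is negative. Then (a) as $i$ ranges over $\{1,\dots,2nk\}$ and $j$ over $\{1,\dots,u\}$ these entries tile $\{x+1,\dots,nu(v+2k)\}$ exactly once, which is the bijective packing you flagged but did not carry out; and (b) in the $j$-th inflated row of block-row $r$ the $2k$ contributions are $\pm\bigl(x+(j-1)2nk+i_\ell\bigr)$ with exactly $k$ plus and $k$ minus signs --- this is precisely where strippability of ${\cal H}$ is used --- so the offsets $x+(j-1)2nk$ cancel in pairs and the sum collapses to $-{\cal H}(r,r)$, which the filler $\pm A_{|{\cal H}(r,r)|}$ then cancels. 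Without exhibiting such a block (or an equivalent one) and checking (a) and (b), your support and row-sum claims are asserted rather than proved; supplying this is the missing step, and once it is supplied your argument coincides with the paper's.
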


\begin{proof}

Let ${\cal H}$ represent a strippable $H(n;2k+1)$ Heffter array with $T$  representing the cells of the  primary transversal. Without loss of generality assume that $T= \{(i,i)\mid 1\leq i\leq n\}$. We will be replacing the cells of ${\cal H}$ with $u \times u$ filler-arrays and will call our final resulting array $C$. 
Let $A_i$, $i=1,\dots, n$ represent the set of $n$, 
$(u;v)-$filler arrays.  We begin by placing the filler arrays in the cells of the primary transversal in ${\cal H}$.  In particular, if ${\cal H}(i,i)=t>0$, place $A_t$ in cell $(i,i)$ and if ${\cal H}(i,i)=-t < 0$, place $-A_t$ in cell $(i,i)$.  At this stage the support $C$ is  
$\{1,\dots,nuv\}$.  Let $x = nuv$.

We intend to replace the remaining filled cells of ${\cal H}$ (i.e. the filled cells of ${\cal H} \setminus T$) with $u \times u$ arrays.  Since these remaining cells of ${\cal H}$  have support 
$\{ n+1, n+2, \ldots , n(2k+1)\}$ we will name the  $u \times u$ arrays accordingly.
For $i \in \{1, 2, \ldots ,2nk\}$ define the $u \times u$ diagonal array $D_{n+i}$ as follows. The array is empty except for the main diagonal and $D_{n+i}(j,j) = x+ (j-1)2nk +i$, for $1\leq j \leq u$.   Here is a visualization of $D_{n+i}$:
$$ D_{n+i}=
\begin{array}{|cccccc|} \hline
 x+i &&&&& \\
&x+ 2nk +i&&&& \\
&&x+ 2(2nk) +i&&& \\  
&&&\ddots&& \\
&&&&& \\
&&&&& x+ (u-1)2nk +i\\ \hline
\end{array}
$$

To complete the construction we now replace the remaining filled cells in ${\cal H}$ with the $u \times u$ arrays $D_j$ as follows.   If  ${\cal H}(a,b)=t>0$, then  place $D_t$ in cell $(a,b)$ and if ${\cal H}(a,b)=-t<0$, then  place $-D_t$ in cell $(a,b)$.  We have now constructed a $nu \times nu$ array $C$.  We will show that $C$ is a Heffter array $H(nu;v+2k)$.

It is clear that each row and each column of $C$ contains $v$ symbols from an $A_i$ array and $2k$ symbols from the $2k$ $D_j$ arrays, hence each row and each column contains $ v+2k$ filled cells, as required.   Now we look at row sums.  If any $D_i$ and $-D_j$ are concatenated, it is easy to check that each row sum is $i-j$.  The sum of the nondiagonal cells in row $r$ of $\cal H$ is $-{\cal H}(r,r)$ and there are the same number of positive entries in row $r$ as negative entries.  Thus when these entries are replaced by the corresponding $D_i$ arrays, we have that every one of the resulting $u$ rows has row sum equal to $-{\cal H}(r,r)$.  But the $A$ array placed in ${\cal H}(r,r)$ has all row sums equal to ${\cal H}(r,r)$, hence all of the row sums of $C$ are equal to $0$.  The same reasoning shows that all column sums in $C$ also equal $0$.

The proof will be complete if we can show that the support of $C$ is $\{1,2, \ldots ,(nu)(v+2k)\}$.  The support of $\cup_{i =1}^n A_i = \{1,\dots,nuv\}$. The support of the first rows of $D_{n+1}, D_{n+2}, \ldots ,D_{n+2nk}$ is $\{x+1,x+2, \ldots ,x+2nk\}$,  (where $x = nuv$).   In general, for $1\leq j \leq u$, the support of the $j$th rows of $D_{n+1}, D_{n+2}, \ldots ,D_{n+2nk}$ is 
$$\{x+(j-1)2nk +1,x+(j-1)2nk+ 2, \ldots ,x+(j-1)2nk+2nk\}.$$
\noindent\
It is now easy to see that the support of $C$ is 
$$
(\cup_{j=1}^u \{x+(j-1)2nk +1,x+(j-1)2nk+ 2, \ldots ,x+(j-1)2nk+2nk\} )\cup \{1,\dots,nuv\}.$$
\noindent
The above union is equal to $\{1,2,\ldots ,nu(v+2k)\}$.  Hence we have that $C$ is indeed an $H(nu;v+2k)$ as desired.
\end{proof}

In the next two lemmas we construct some sets of filler arrays for use in Lemma \ref{strip-lemma}.

\begin{lemma}\label{(3,3)-filler)}
For all $n\geq 2$, there exists a set of $n$ $(3;3)-$filler arrays.
\end{lemma}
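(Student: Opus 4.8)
The plan is to construct the $n$ arrays explicitly. First I restate the target concretely: I must produce $3\times 3$ integer matrices $A_1,\dots,A_n$, each with all nine cells filled, so that every row and every column of $A_i$ sums to $i$, and so that the multiset of absolute values occurring across $A_1,\dots,A_n$ is exactly $\{1,2,\dots,9n\}$, each value once. In other words each $A_i$ is a $3\times 3$ semimagic square (equal row and column sums) with line sum $i$, and the nine $A_i$ together partition $\{1,\dots,9n\}$ up to sign.

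My first step is to trivialize the line-sum condition via the standard semimagic parametrization: choosing the four entries $A=A_i(1,1)$, $B=A_i(1,2)$, $C=A_i(2,1)$, $D=A_i(2,2)$ freely forces the remaining five, namely $i-A-B$, $i-C-D$, $i-A-C$, $i-B-D$, and $A+B+C+D-i$, after which all six line sums automatically equal $i$. Hence the line-sum requirement costs nothing, and the whole problem becomes bookkeeping of magnitudes: I must choose the four free entries, as functions of $i$ and $n$ together with a sign pattern, so that the nine resulting magnitudes sweep out a partition of $\{1,\dots,9n\}$ as $i$ runs over $1,\dots,n$.

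The natural attempt is to give each $A_i$ one value from each of nine arithmetic ``tracks'' in $i$, for instance grouping $\{1,\dots,9n\}$ by residue modulo $n$, or into nine consecutive length-$n$ blocks. Here lives the main obstacle, and it is worth isolating before building anything: such uniform assignments provably cannot work. For a line to sum to the \emph{small} value $i$ while its three magnitudes are large, the $\Theta(n)$-sized parts must cancel inside that line; since a line has only three cells (an odd number), one checks that every line is then forced to carry two positive and one negative entry, and that the negative entry's track index must equal the sum of the two positive indices in both its row and its column. Reducing this to the indices, one is asking to place $\{0,1,\dots,8\}$ in a $3\times 3$ array so that each main-diagonal index equals the sum of the other two indices in its row and also in its column; a short argument (the index $8$ must lie on the diagonal, the diagonal indices must total $18$, and the two admissible completions $\{8,4,6\}$ and $\{8,3,7\}$ both fail) shows this is impossible. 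The same mod-$n$ analysis rules out any partition of $\{1,\dots,9n\}$ into nine clean length-$n$ intervals carried by slope-$\pm1$ tracks. So no single ascending or interval-aligned assignment exists.

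The construction must therefore mix ascending and descending assignments and, as the block obstruction shows, cannot keep every track a clean length-$n$ interval; I would resolve this by writing down an explicit parametric template $A_i$ whose entries have the forms $\pm(cn+i)$ and $\pm(cn+\text{const}-i)$, with the nine constants $c$ arranged so that in each row and each column the $n$-scale parts cancel while the nine magnitude ranges interleave to cover $\{1,\dots,9n\}$ exactly (allowing a couple of tracks a larger step so their magnitude sets interleave rather than tile as intervals). The verification then splits into two routine checks: each row and column sums to $i$ (immediate from the semimagic parametrization and the chosen signs), and the $9n$ magnitudes are distinct and fill $\{1,\dots,9n\}$ (a direct range computation). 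Finally I would treat the smallest values of $n$ separately, where $i=n$ or $i=n/2$ unlocks additional line types and where the generic template may lose positivity, simply by exhibiting the arrays. I expect pinning down the explicit mixed template that simultaneously realizes the partition and all the cancellations to be the crux of the argument.
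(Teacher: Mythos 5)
You have correctly diagnosed the shape of the problem --- that the line-sum condition is cheap and the whole difficulty is choosing nine ``tracks'' of the form $\pm(cn\pm t)$ whose magnitudes partition $\{1,\dots,9n\}$, with the boundary case needing separate treatment --- and this is exactly the route the paper takes. But the proposal stops precisely where the proof begins: you never exhibit the template, and you say yourself that ``pinning down the explicit mixed template \dots{} [is] the crux of the argument.'' For a purely constructive existence lemma whose entire content is the explicit array, announcing that a suitable template should exist is not a proof; nothing in your argument certifies that the nine constants $c$ and signs can actually be chosen compatibly (simultaneous cancellation in all six lines \emph{and} an exact tiling of $\{1,\dots,9n\}$). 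The paper closes this gap in one line by displaying, for $t=1,\dots,n-1$, the array with rows $(9n-t,\,-8n+t,\,-n+t)$, $(-6n+t,\,n+t,\,5n-t)$, $(-3n+t,\,7n-t,\,-4n+t)$ --- whose tracks are the length-$(n-1)$ intervals $[(c-1)n+1,\,cn-1]$ --- together with a single extra array $A_n$ carrying the leftover multiples $n,2n,\dots,9n$. Your ``no clean length-$n$ intervals'' observation is consistent with this (the paper indeed uses length-$(n-1)$ intervals plus a special $A_n$), but the supporting argument is also shaky: a line summing to $i$ with large magnitudes is \emph{not} forced to have two positive and one negative entry (the paper's first row has one positive and two negatives, with $9=8+1$), and requiring the ``large'' index to sit on the main diagonal is an unforced normalization, so the claimed impossibility of uniform assignments is not actually established. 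Neither defect can be waved away: you need to write down the arrays and check the two routine conditions.
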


\begin{proof}Below are $n$ arrays $A_t$, $t=1,\dots,n$, which partition the set $\{1,\dots, 9n\}$  into $3\times 3$ arrays such that in each $A_t$ the rows and columns sum to $t$. Specifically, for $t=1, 2, \ldots,  n-1$  the $3\times 3$ arrays $A_t$ are defined as 

$$A_t=\begin{array}{|c|c|c|}
\hline
	9n-t&-8n+t&-n+t\\
\hline
	-6n+t&n+t&5n-t\\
\hline
	-3n+t&7n-t&-4n+t\\
\hline
\end{array} \mbox{\ \ \ \  \   also let \ \ \ \   }
A_n=
\begin{array}{|c|c|c|}
\hline
9n&-6n&-2n\\
\hline
-5n&-n&7n\\
\hline
-3n&8n&-4n\\
\hline
\end{array} \ \ .
$$
It is straightforward to check that the $n$ arrays $A_1, A_2, \ldots ,A_n$ are a set of $n$ $(3;3)$-filler arrays.
\end{proof}

\begin{lemma}\label{(4,3)-filler)}
For all $n\geq 2$, there exists a set of $n$ $(4;3)-$filler arrays.
\end{lemma}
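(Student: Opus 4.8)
The plan is to mirror the structure of the preceding $(3;3)$-filler lemma: exhibit a single generic array $A_t$ valid for $t=1,\dots,n-1$ together with one special array $A_n$, and then verify the four defining conditions by inspection. Each $A_t$ will be a $4\times 4$ array with its four empty cells placed on the main diagonal, so that conditions~1 (exactly three filled cells per row and column) is immediate. The twelve nonempty entries of $A_t$ will each have absolute value of the form $jn-t$ for a distinct $j\in\{1,\dots,12\}$, with signs and positions chosen so that in every row and every column the ``$n$-parts'' cancel and the ``$t$-parts'' add up to exactly $t$; this is what forces conditions~3 and~4.

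Concretely, I would take, for $t=1,\dots,n-1$,
$$A_t=\begin{array}{|c|c|c|c|}\hline
 & -(5n-t) & 12n-t & -(7n-t)\\\hline
11n-t & & -(8n-t) & -(3n-t)\\\hline
-(9n-t) & -(n-t) & & 10n-t\\\hline
-(2n-t) & 6n-t & -(4n-t) & \\\hline
\end{array}$$
The design principle is that if one reads off the signed coefficients of $n$ in each line (e.g. $-5+12-7$ in row~1, or $+11-9-2$ in column~1), they sum to $0$, while the signed coefficients of $t$ sum to $+1$ in every line; hence each row and column sums to $t$. For the support, note that as $t$ runs over $1,\dots,n-1$ the value $jn-t$ sweeps out the band $[(j-1)n+1,\,jn-1]$, so $\bigcup_{t=1}^{n-1}A_t$ covers every element of $\{1,\dots,12n\}$ except the multiples $n,2n,\dots,12n$. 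These leftover multiples must be supplied by $A_n$.

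For the special array I would take entries $\pm jn$, each $j\in\{1,\dots,12\}$ used once, again with an empty diagonal and all row and column sums equal to $n$; an arrangement that works is
$$A_n=\begin{array}{|c|c|c|c|}\hline
 & -4n & 12n & -7n\\\hline
-n & & -9n & 11n\\\hline
-6n & 10n & & -3n\\\hline
8n & -5n & -2n & \\\hline
\end{array}$$
whose support is exactly $\{n,2n,\dots,12n\}$, completing condition~2. I expect the main obstacle to be finding these explicit patterns, since one cannot simply substitute $t=n$ into $A_t$ (the entries would collapse onto the wrong band), so $A_n$ genuinely requires its own sign assignment; more generally, the difficulty is the simultaneous bookkeeping of placing twelve signed values so that the empties form a transversal \emph{and} every one of the eight lines achieves the prescribed sum while the absolute values tile $\{1,\dots,12n\}$ without overlap. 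This is a small hand-built signed design; once the two templates above are on the page, all four conditions reduce to the routine arithmetic verifications sketched here, valid for every $n\geq 2$ (the case $n=2$ being checked directly, where the $A_t$ supply the odd values and $A_n$ the even values of $\{1,\dots,24\}$).
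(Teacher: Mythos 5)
Your construction is correct: all eight line sums of your generic $A_t$ and of your special $A_n$ do equal $t$ and $n$ respectively, the empty cells form a transversal, and the absolute values $jn-t$ (for $t=1,\dots,n-1$) together with the multiples $jn$ tile $\{1,\dots,12n\}$ exactly. This is essentially the same approach as the paper, which likewise exhibits one explicit generic template for $t<n$ (with entries of the form $jn\pm t$ and empty diagonal) plus a separate template $A_n$ supplying the multiples of $n$.
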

\begin{proof}
Below are $n$ arrays $A_t$, $t=1,\dots,n$, which partition the set $\{1,\dots, 12n\}$  into $4\times 4$ arrays (with one empty cell per row and per column) such that in each $A_t$ the rows and columns all sum to $t$. For $t=1, 2, \ldots,  n-1$ the $4 \times 4$ arrays  $A_t$ are defined as 

\renewcommand{\arraycolsep}{2pt}
\begin{center}
$A_t=
\begin{array}{|c|c|c|c|}
\hline
	&-11n+t&11n+t&-t\\
\hline
	7n-t&&-2n+t&-5n+t\\
\hline
	-9n+t&4n-t&&5n+t\\
\hline
	2n+t&7n+t&-9n-t&\\
\hline
\end{array}
\mbox{\ \ \ \ \ also  let   \ \  }
A_n=
\begin{array}{|c|c|c|c|}
\hline
	&11n &-12n&2n\\
\hline
	-8n&&4n&5n\\
\hline
	10n&-3n&&-6n\\
\hline
	-1n&-7n&9n&\\ \hline
\end{array} \ \ . $
\end{center}

\noindent
It is again easy to check that the $n$ arrays  $A_1, A_2, \ldots ,A_n$ are a set of $n$ $(4;3)$-filler arrays.
\end{proof}

\renewcommand{\arraycolsep}{4pt}

\begin{corollary}\label{12m:5}
There exists $H(12m;5)$ for all $m\geq 1$.
\end{corollary}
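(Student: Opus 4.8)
The plan is to apply Lemma~\ref{strip-lemma} directly, with carefully chosen ingredients that make the arithmetic work out to produce $H(12m;5)$. Observe that $12m = (4m)\cdot 3$, so I want to write $nu = 12m$ with a strippable $H(n;2k+1)$ and a set of $n$ $(u;v)$-filler arrays such that $nu = 12m$ and $v + 2k = 5$. The natural choice is to take $u = 3$ and $n = 4m$, so that the filler arrays are $3 \times 3$. To get $v + 2k = 5$ with the $(3;3)$-filler arrays of Lemma~\ref{(3,3)-filler)} (which have $v = 3$), I need $2k = 2$, i.e. $k = 1$, which means the strippable Heffter array must be $H(4m;2k+1) = H(4m;3)$.

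The first step is therefore to invoke Corollary~\ref{strip-4m,3}, which guarantees a strippable $H(4m;3)$ for all $m \geq 1$. The second step is to invoke Lemma~\ref{(3,3)-filler)}, which supplies a set of $n = 4m$ $(3;3)$-filler arrays for all $4m \geq 2$, i.e. for all $m \geq 1$. (The lemma requires $n \geq 2$, which is satisfied since $4m \geq 4$.) The third step is simply to feed these two ingredients into Lemma~\ref{strip-lemma} with the parameter identifications $n \mapsto 4m$, $u \mapsto 3$, $v \mapsto 3$, and $k \mapsto 1$. The lemma then produces a Heffter array $H(nu; v + 2k) = H(4m \cdot 3; 3 + 2) = H(12m; 5)$, which is exactly what is claimed.

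I do not anticipate a serious obstacle here, since the corollary is a pure bookkeeping consequence of two previously established results; the only thing to verify is that the parameters line up. The one point worth double-checking is that Lemma~\ref{strip-lemma} requires a strippable $H(n;2k+1)$ with $2k+1$ odd filled cells per row, and indeed $3 = 2(1)+1$ matches with $k = 1$, so the hypothesis is met. I would present the proof essentially as a single short paragraph: \emph{Apply Lemma~\ref{strip-lemma} with the strippable $H(4m;3)$ from Corollary~\ref{strip-4m,3} and the set of $4m$ $(3;3)$-filler arrays from Lemma~\ref{(3,3)-filler)}; this yields an $H(12m;5)$.} The substantive content has already been carried by the earlier lemmas, so the corollary itself needs no computation beyond confirming the arithmetic $4m \cdot 3 = 12m$ and $3 + 2\cdot 1 = 5$.
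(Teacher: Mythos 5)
Your proposal is correct and is essentially identical to the paper's own proof: both invoke the strippable $H(4m;3)$ from Corollary~\ref{strip-4m,3}, the set of $4m$ $(3;3)$-filler arrays from Lemma~\ref{(3,3)-filler)}, and then apply Lemma~\ref{strip-lemma} with $v+2k=3+2=5$. The explicit parameter bookkeeping you include is a nice touch but adds nothing beyond what the paper does.
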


\begin{proof} For all $m \geq 1$ there exists a strippable $H(4m;3)$ by Corollary \ref{strip-4m,3}.  By Lemma  \ref{(3,3)-filler)} there is a set of $4m$ (3;3)-filler arrays.  The result then follows from Lemma \ref{strip-lemma}.
\end{proof}

We demonstrate the smallest case of Corollary \ref{12m:5} by constructing an $H(12;5)$.  

\begin{example} \label{H12,5}
An $H(12;5)$ constructed via Corollary \ref{12m:5}
\end{example}

We begin with the $H(4;3)$ constructed from Theorem \ref{4m+1,3} (which was also presented in Example \ref{ex1}). Note the  primary transversal  on the main diagonal.

\begin{center}
\renewcommand{\tabcolsep}{4pt}
\begin{tabular}{|c|c|c|c|}  \hline
4& 8 & & -12\\ \hline
-9 & 3 & 6 &  \\ \hline
 & -11 & 1 & 10 \\ \hline
5 &  & -7 & 2 \\ \hline 
\end{tabular}
\end{center}

\noindent
From Lemma \ref{(3,3)-filler)}, there are four $(3;3)-$filler arrays $A_1, A_2, A_3$ and $A_4$. They are as follows:
\renewcommand{\tabcolsep}{12pt}
\begin{center}
\begin{tabular}{cccc}
$A_1$&$A_2$&$A_3$ & $A_4$\\
\renewcommand{\tabcolsep}{4pt}
\begin{tabular}{|c|c|c|}
\hline
35&-31&-3\\ \hline
-23&5&19\\ \hline
-11&27&-15\\ \hline
\end{tabular}& \renewcommand{\tabcolsep}{4pt}
\begin{tabular}{|c|c|c|}
\hline
34&-30&-2\\ \hline
-22&6&18\\ \hline
-10&26&-14\\ \hline
\end{tabular}
& \renewcommand{\tabcolsep}{4pt}
\begin{tabular}{|c|c|c|}
\hline
33&-29&-1\\ \hline
-21&7&17\\ \hline
-9&25&-13\\ \hline
\end{tabular}
& \renewcommand{\tabcolsep}{4pt}
\begin{tabular}{|c|c|c|}
\hline
36&-24&-8\\ \hline
-20&-4&28\\ \hline
-12&32&-16\\ \hline
\end{tabular}
\end{tabular}
\end{center}
\renewcommand{\tabcolsep}{4pt}

\noindent
Note that in Lemma \ref{strip-lemma}, $4 \times 3 \times 3 = 36$. To construct the $H(12;5)$  via Lemma \ref{strip-lemma} we first show the step where each symbol of the 
$H(4;3)$ is replaced by either an $A$ array or a $D$ array.  We get the following array.

\begin{center} 
$
\begin{array}{|c|c|c|c|}  \hline
A_4& D_8 & & -D_{12} \\ \hline
-D_{9} & A_3 & D_{6} &  \\ \hline
 & -D_{11} & A_1 & D_{10} \\ \hline
D_{5} &  & -D_{7} & A_2 \\ \hline
\end{array} $
\end{center}

\noindent
The final $H(12;5)$ is below.

\begin{center}
\begin{tabular}{|c|c|c|c|c|c|c|c|c|c|c|c|c|}
\hline
36&-24&-8&40&&&&&&-44&&\\ \hline
-20&-4&28&&48&&&&&&-52&\\ \hline
-12&32&-16&&&56&&&&&&-60\\ \hline
-41&&&33&-29&-1&38&&&&&\\ \hline
&-49&&-21&7&17&&46&&&&\\ \hline
&&-57&-9&25&-13&&&54&&&\\ \hline
&&&-43&&&35&-31&-3&42&&\\ \hline
&&&&-51&&-23&5&19&&50&\\ \hline
&&&&&-59&-11&27&-15&&&58\\ \hline
37&&&&&&-39&&&34&-30&-2\\ \hline
&45&&&&&&-47&&-22&6&18\\ \hline
&&53&&&&&&-55&-10&26&-14 \\ \hline 
\multicolumn{12}{c}{$H(12;5)$}
\end{tabular}\end{center}

\begin{theorem} \label{12m;k} There exists an $H(12m;k)$ for every $m \geq 1$  and every   
$k \equiv 1$ (mod 4) with  $5 \leq k < 12m$ .\end{theorem}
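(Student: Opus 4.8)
The plan is to bootstrap from the base array $H(12m;5)$ supplied by \cref{12m:5} and to insert the extra filled cells four diagonals at a time using \lref{add4}. Writing the target index as $k = 5 + 4s$, it suffices to exhibit, for each admissible $s$, an $H(12m;5)$ carrying $s$ disjoint sets of four consecutive empty diagonals, and then to invoke \lref{add4} (which also handles all the row-sum, column-sum, and support bookkeeping). So the entire argument reduces to understanding the diagonal occupancy of the particular $H(12m;5)$ produced in \cref{12m:5}.

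First I would unwind that construction. It comes from \lref{strip-lemma} applied to a cyclically tridiagonal strippable $H(4m;3)$ (from \cref{strip-4m,3}, whose primary transversal is the main diagonal) together with the $(3;3)$-filler arrays of \lref{(3,3)-filler)}. Partition the $12m \times 12m$ array into $3\times 3$ blocks indexed by the cells of the underlying $H(4m;3)$. Because that array is cyclically tridiagonal, the only nonempty blocks sit at block-positions $(i,i)$, $(i,i\pm 1)$, and the two wraparound corners. The diagonal blocks are filled by the fully occupied filler arrays $A_t$, which contribute cells on the big-array diagonals of offset $-2,-1,0,1,2$; each off-diagonal block is a purely diagonal array $\pm D_t$, contributing cells only on offset $3(j-i)$, i.e.\ $\pm 3$, and the two corner blocks likewise land on offsets $\equiv \pm 3 \pmod{12m}$. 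Hence every filled cell of this $H(12m;5)$ lies on one of the seven consecutive diagonals with offsets in $\{-3,-2,-1,0,1,2,3\}$.

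With this structure in hand the counting is immediate. The remaining diagonals, those of offset $4,5,\dots,12m-4$, are entirely empty and consecutive, so the array contains $\lfloor (12m-7)/4\rfloor = 3m-2$ disjoint sets of four consecutive empty diagonals. Applying \lref{add4} with $s$ of these sets, for each $s$ with $0 \le s \le 3m-2$, yields an $H(12m;5+4s)$; note $5+4s \le 12m-3 < 12m = n$, so the condition $k+4s \le n$ in \lref{add4} is met. Since $5 + 4(3m-2) = 12m-3$ is exactly the largest integer below $12m$ congruent to $1 \pmod 4$, this realizes every $k \equiv 1 \pmod 4$ with $5 \le k < 12m$, which is the theorem.

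The main obstacle, and the only part requiring real care, is verifying the seven-diagonal claim: I must track precisely how each fully filled $3\times 3$ filler block and each diagonal $D$-block maps onto the diagonals of the enlarged array, paying particular attention to the two wraparound corner blocks so that their offsets genuinely collapse into $\pm 3 \pmod{12m}$ rather than spilling onto other diagonals. Everything after that, namely the count $\lfloor (12m-7)/4\rfloor = 3m-2$ and its exact match with the admissible range of $k$, is routine arithmetic.
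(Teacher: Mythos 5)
Your proposal is correct and follows essentially the same route as the paper: both start from the $H(12m;5)$ of Corollary \ref{12m:5}, observe that its filled cells occupy exactly the seven consecutive diagonals centered on the main diagonal, and then apply Lemma \ref{add4} to the remaining $12m-7$ consecutive empty diagonals. The only difference is that you spell out the block-offset computation behind the seven-diagonal claim (including the wraparound corners), which the paper dismisses as "easy to see."
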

\begin {proof} Use Corollary \ref{12m:5} to make an $H(12m;5)$.  This array was constructed from an 
$H(4m;3)$ which had a primary diagonal on the main diagonal and filled cells on the two diagonals adjacent to the main diagonal.  It is easy to see that all of the filled cells of the resulting $H(12m;5)$ occur in 7 consecutive diagonals centered at the main diagonal (see Example \ref{H12,5} above). 
This leaves $12m-7$ consecutive empty diagonals in the resulting array.  Now use Lemma \ref{add4} to construct the desired $H(12m;k)$. \end{proof}

Next we use the $(4;3)-$filler arrays  of Lemma \ref{(4,3)-filler)} to cover additional cases.

\begin{corollary}\label{16m-5}
There exists an $H(16m;5)$ and an $H(16m+4;5)$ for all $m\geq 1$.
\end{corollary}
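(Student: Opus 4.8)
The plan is to obtain both families as immediate applications of Lemma~\ref{strip-lemma}, in exact parallel with the proof of Corollary~\ref{12m:5}, but substituting the $(4;3)$-filler arrays of Lemma~\ref{(4,3)-filler)} for the $(3;3)$-filler arrays used there. Recall that Lemma~\ref{strip-lemma} converts a strippable $H(n;2k+1)$ together with a set of $n$ $(u;v)$-filler arrays into an $H(nu;\,v+2k)$. Specializing to $2k+1=3$ (so $k=1$) and $(u,v)=(4,3)$ forces the output to be an $H(4n;\,5)$: the filler contributes $v=3$ filled cells per line and the stripped copies of the base array contribute $2k=2$ more, giving exactly $5$. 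Thus the block size is automatically correct, and the only input still to be chosen is the strippable base array, whose order $n$ determines the final order $4n$.

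For the family $H(16m;5)$ I would take $n=4m$, so that $4n=16m$. A strippable $H(4m;3)$ is supplied by Corollary~\ref{strip-4m,3}, and a set of $n=4m$ $(4;3)$-filler arrays exists by Lemma~\ref{(4,3)-filler)} since $4m\geq 2$ whenever $m\geq 1$. Feeding these two ingredients into Lemma~\ref{strip-lemma} yields an $H(16m;5)$. For the family $H(16m+4;5)$ I would instead take $n=4m+1$, so that $4n=16m+4$. Here the strippable base array $H(4m+1;3)$ is provided by Corollary~\ref{4n+1.strip}, and a set of $n=4m+1$ $(4;3)$-filler arrays again exists by Lemma~\ref{(4,3)-filler)}, as $4m+1\geq 2$. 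Applying Lemma~\ref{strip-lemma} to these ingredients produces an $H(16m+4;5)$.

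There is no genuine obstacle beyond correct bookkeeping: the real work---verifying that the $(4;3)$-filler arrays satisfy the support and row/column-sum conditions, and that the ladder-based arrays of Theorems~\ref{4m+1,3} and~\ref{4m,3} are strippable---has already been carried out in establishing the cited ingredients. The one point requiring care is matching the two output orders $16m$ and $16m+4$ to the two residues $n\equiv 0$ and $n\equiv 1 \pmod 4$ of the strippable base $H(n;3)$, and confirming that both Corollaries~\ref{strip-4m,3} and~\ref{4n+1.strip} indeed cover the whole range $m\geq 1$ (in particular the smallest instances $H(16;5)$ and $H(20;5)$, which need strippable $H(4;3)$ and $H(5;3)$ respectively).
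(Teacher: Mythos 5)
Your proof is correct and follows exactly the same route as the paper: take the strippable $H(4m;3)$ from Corollary~\ref{strip-4m,3} and the strippable $H(4m+1;3)$ from Corollary~\ref{4n+1.strip}, combine each with the corresponding set of $(4;3)$-filler arrays from Lemma~\ref{(4,3)-filler)}, and apply Lemma~\ref{strip-lemma} to get $H(16m;5)$ and $H(16m+4;5)$. The parameter bookkeeping ($2k+1=3$, $(u,v)=(4,3)$, output $H(4n;5)$) is right, so nothing further is needed.
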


\begin{proof} For all $m \geq 1$ there exists a strippable $H(4m;3)$ by Corollary \ref{strip-4m,3} and a strippable
$H(4m+1;3)$ by Corollary \ref{4n+1.strip}.  From Lemma  \ref{(4,3)-filler)} there is a set of $4m$  and a set of $4m+1$ (4;3)-filler arrays.  The result again follows from Lemma \ref{strip-lemma}.
\end{proof}

\begin{theorem}\label{h16m} a) There exists an $H(16m;k)$ for every $m \geq 1$  and every  
$k \equiv 1$ (mod 4) with  $5 \leq k < 16m $. 

(b) There exists an $H(16m+4;k)$ for every $m \geq 1$  and  
$k \equiv 1$ (mod 4) with  $5 \leq k < 16m +4 $ 
.\end{theorem}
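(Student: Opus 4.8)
The plan is to follow exactly the same pattern used to prove Theorem~\ref{12m;k}, substituting the ingredients appropriate to the $16m$ and $16m+4$ cases. For part (a), I would begin with Corollary~\ref{16m-5}, which produces an $H(16m;5)$ built via Lemma~\ref{strip-lemma} from a strippable $H(4m;3)$ and a set of $4m$ $(4;3)$-filler arrays. The key observation to extract is the \emph{diagonal structure} of this $H(16m;5)$: since the strippable $H(4m;3)$ has its primary transversal on the main diagonal and its remaining filled cells on the two adjacent (broken) diagonals, and since Lemma~\ref{strip-lemma} replaces each filled cell of the base array with a $4\times 4$ filler or diagonal $D_j$ block, the filled cells of the resulting $H(16m;5)$ must again be confined to a bounded band of consecutive diagonals centered on the main diagonal. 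I would count precisely how many consecutive diagonals this band occupies.

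First I would establish this diagonal count. In the $(3;3)$-filler case of Theorem~\ref{12m;k} the band occupied $7$ consecutive diagonals; here the fillers are $4\times 4$ with three filled cells per row, so the replacement blocks are one unit wider, and I expect the band to occupy $9$ consecutive diagonals centered at the main diagonal. Whatever the exact number $b$ turns out to be, the essential point is that $b$ is a fixed constant independent of $m$, so that the array leaves $16m - b$ consecutive \emph{empty} diagonals. Second, I would invoke Lemma~\ref{add4}: each block of four consecutive empty diagonals can be filled by a shifted $H_s(n;4)$ to raise the filled-cell count by $4$, so starting from the $H(16m;5)$ and applying Lemma~\ref{add4} $s$ times yields an $H(16m;4s+5)$ for every $s$ with $4s+5 < 16m$. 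This covers precisely all $k \equiv 1 \pmod 4$ with $5 \le k < 16m$, giving part (a). Part (b) is identical in structure: begin from the $H(16m+4;5)$ of Corollary~\ref{16m-5} (built from a strippable $H(4m+1;3)$ via Corollary~\ref{4n+1.strip}), note that its filled cells again lie in a fixed band of consecutive diagonals centered at the main diagonal, and apply Lemma~\ref{add4} repeatedly to reach every admissible $k$.

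The main obstacle I anticipate is purely bookkeeping rather than conceptual: I must verify carefully that the filled cells of the $H(16m;5)$ and $H(16m+4;5)$ genuinely occupy \emph{consecutive} diagonals as a contiguous band, so that the complement really is a union of sets of four consecutive \emph{empty} diagonals to which Lemma~\ref{add4} applies. Because the filler arrays $A_t$ in Lemma~\ref{(4,3)-filler)} have an empty cell in each row and column, the expansion of a single base cell into a $4\times 4$ block is not a full block, and I would want to confirm that these internal empty cells do not disrupt the consecutiveness of the occupied band or, conversely, that they are already accounted for within the band rather than creating spurious gaps. Once the band width is pinned down and shown to be a constant less than $16m$ (respectively $16m+4$) for all $m \ge 1$, the remainder is a direct citation of Lemma~\ref{add4}, exactly as in Theorem~\ref{12m;k}, and the ranges of $k$ fall out immediately.
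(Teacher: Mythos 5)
Your overall route is the paper's: confine the filled cells of the $H(16m;5)$ from Corollary~\ref{16m-5} to a band of consecutive diagonals (you correctly anticipate the band is $9$ diagonals wide: seven from the $(4;3)$-filler blocks plus the two $D$-diagonals starting at cells $(1,5)$ and $(5,1)$), then apply Lemma~\ref{add4} repeatedly. But there is a genuine gap at the top of the range of $k$. You assert that once the band width $b$ is shown to be a constant, ``the ranges of $k$ fall out immediately'' and that the process ``covers precisely all $k \equiv 1 \pmod 4$ with $5 \le k < 16m$.'' It does not. With $b=9$ there are only $16m-9$ consecutive empty diagonals, hence only $\lfloor (16m-9)/4\rfloor = 4m-3$ disjoint sets of four consecutive empty diagonals, so Lemma~\ref{add4} can be applied at most $4m-3$ times and the construction reaches only $k \le 5 + 4(4m-3) = 16m-7$. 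The largest admissible value $k = 16m-3$ is therefore \emph{not} produced by your argument; likewise in part (b) the value $k = 16m+1$ is out of reach (only $16m-5$ empty consecutive diagonals remain, giving $k \le 16m-3$). This is precisely why the analogous Theorem~\ref{12m;k} works without a patch (there $b=7$, and $5+4\lfloor(12m-7)/4\rfloor = 12m-3$ is already the maximum), while the $16m$ and $16m+4$ cases do not.

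The paper closes this gap by a separate appeal to Theorem~\ref{from-boosters}: one checks that $4\lceil (n-4)/12\rceil + 5 \le n-3$ for $n = 16m$ and $n = 16m+4$ with $m \ge 1$, so the top value of $k$ exists by that theorem. Your proof needs this (or some other) additional ingredient for $k = 16m-3$ and $k = 16m+1$; as written it establishes the result only for $5 \le k \le 16m-7$ (resp.\ $5 \le k \le 16m-3$).
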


\begin{proof} (a) From the construction given in Lemma \ref{strip-lemma}, it is  straightforward to see that  the filled cells from the $A_i$'s   (the $(4;3)-$filler arrays) occupy cells in the seven diagonals centered around the main diagonal.  In addition, the cells from the  $D_i$ arrays again appear in the two diagonals that start in cell $(1,5)$ and in cell $(5,1)$.  Hence there are precisely 9 consecutive diagonals (centered at the main diagonal) which contain filled cells. Thus  the longest set of consecutive empty diagonals is  of size $16m - 9$. We use Lemma  \ref{add4} to  fill four diagonals at a time to this square. So we can add at most $16m-12$ new filled cells in each row and each column.  Since there were 5 filled cells per row and column to start with, we can therefore have at most $16m-7$ filled cells in each row and each column of the resulting Heffter array from this construction.   Luckily, from Theorem \ref{from-boosters} there exists an $H(16m;16m-3)$ completing the proof of part (a).

(b) The proof is identical to case (a) except now the array has order $16m+4$ instead of $16m$. Note that again the very largest case of $k$ exists from Theorem \ref{from-boosters}.
\end{proof}

We summarize the results of this section in the next theorem.

\begin{theorem} \label{summary4.1} There exists a Heffter array $H(n;k)$ with $n \equiv 0$ (mod 4) and $k \equiv 1$ (mod 4) if

\begin{tabular}{l}
a) $ 4  \lceil{(n-4)/12}\rceil +5 \leq k <n$, or \\
b)  $n \equiv 0$ (mod 12) and $5 \leq k < n$, or \\
c) $n \equiv 0$ (mod 16)  and   $5 \leq k <n$, or \\
d) $n \equiv 4$ (mod 16)  and   $5 \leq k < n $. 
\end{tabular}
\end{theorem}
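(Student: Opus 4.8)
The plan is to assemble Theorem~\ref{summary4.1} as a clean bookkeeping summary of the four constructions already proved in this section, so the entire burden of the proof is to point to the correct earlier result for each of the four itemized parameter ranges. First I would state that each part follows directly from one specific theorem: part (a) is precisely the range $4\lceil (n-4)/12\rceil +5 \leq k < n$ with $n\equiv 0\pmod 4$, which is exactly the conclusion of Theorem~\ref{from-boosters}; part (b) is the content of Theorem~\ref{12m;k}; and parts (c) and (d) are the two halves of Theorem~\ref{h16m}, covering $n\equiv 0\pmod{16}$ and $n\equiv 4\pmod{16}$ respectively, each for the full range $5\leq k<n$. Thus the proof is essentially a one-line citation of the four source theorems.

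The one point that does require a sentence of genuine argument is consistency: I would want to confirm that the congruence hypotheses are compatible and that every claimed $(n,k)$ pair is legitimately produced. In each case $n\equiv 0\pmod 4$ and $k\equiv 1\pmod 4$ are the standing assumptions of the section, and Lemma~\ref{necessary} guarantees $nk\equiv 0\pmod 4$ holds automatically here (since $4\mid n$), so there is no hidden parity obstruction to worry about. I would also remark that parts (b), (c), (d) subsume part (a) in their respective residue classes — for instance when $12\mid n$ part (b) already gives the full range $5\leq k<n$, which contains the range of part (a) — so the theorem is genuinely a disjunction of sufficient conditions rather than a claim that these are the only cases.

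The main (and only) obstacle is purely organizational rather than mathematical: I must make sure the upper limits of $k$ are quoted faithfully from the source theorems. In particular Theorem~\ref{h16m} attains its largest admissible $k$ (namely $16m-3$, i.e. $k=n-3$ when $n=16m$, and the analogue for $n=16m+4$) only by appealing to Theorem~\ref{from-boosters} for the top case, so I would double-check that the stated range $5\leq k<n$ in parts (c) and (d) is exactly what Theorem~\ref{h16m} delivers and does not overreach. Once that verification is in hand, the proof reads simply as: part (a) follows from Theorem~\ref{from-boosters}, part (b) from Theorem~\ref{12m;k}, and parts (c) and (d) from Theorem~\ref{h16m}(a) and~(b) respectively.
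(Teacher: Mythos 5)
Your proposal is correct and matches the paper's proof exactly: the paper likewise disposes of the theorem in one line by citing Theorem~\ref{from-boosters} for part (a), Theorem~\ref{12m;k} for part (b), and Theorem~\ref{h16m} for parts (c) and (d). Your additional consistency remarks (on the parity condition and on the upper limit of $k$ in Theorem~\ref{h16m}) are sound but not needed beyond what those source theorems already establish.
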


\begin{proof}  Part {\em a} is Theorem \ref{from-boosters}, part $b$ is Theorem \ref{12m;k} and parts $c$ and $d$ are  Theorem \ref{h16m}.
\end{proof}

\subsection{{\em H(n;k)} with {\em n} $\equiv$ 3 (mod 4) and {\em k} $\equiv$ 1 (mod 4)} 
\label{subsection4.2}

We begin with the two smallest example in this case, an $H(7;5)$ and an $H(11;5)$.

\renewcommand{\tabcolsep}{2pt}
\begin{example} An $H(7;5)$ and an $H(11;5)$.
\end{example}
{\small
\begin{center}
\begin{tabular}{|r|r|r|r|r|r|r|} \hline
$-$10&&16&$-$1&$-$2&$-$3& \\ \hline
&$-$4&&$-$6&$-$7&$-$5&22 \\ \hline
$-$30&29&$-$9&$-$8&&&18\\ \hline
$-$11&&$-$12&28&$-$31&26& \\ \hline
&$-$14&$-$15&$-$13&&17&25 \\ \hline
27&$-$34&20&&19&&$-$32 \\ \hline
24&23&&&21&$-$35&$-$33 \\ \hline
\multicolumn{7}{c}{$H(7;5)$}\\
\end{tabular}\hspace{.4in}
\begin{tabular}{|r|r|r|r|r|r|r|r|r|r|r|} \hline
$-$1&$-$2&$-$3& & &37& & & &$-$31& \\ \hline
$-$4&$-$5& &$-$6& & &$-$23&38& & & \\ \hline
$-$7&$-$8& &$-$18&$-$10& & & &43& & \\ \hline
 & &$-$11&$-$16&$-$9&$-$17&53& & & & \\ \hline
 & &$-$14&$-$12&$-$13&$-$15& & & & &54\\ \hline
 &40& & &$-$19&$-$49&$-$20& & &48& \\ \hline
 & &$-$22&52& & &$-$55&$-$21&46& & \\ \hline
39&$-$25& & & & & &$-$24& &42&$-$32\\ \hline
$-$27& & & & & &45&41&$-$26& &$-$33\\ \hline
 & & & & &44& &$-$34&$-$28&$-$29&47\\ \hline
 & &50& &51& & & &$-$35&$-$30&$-$36\\ \hline
\multicolumn{11}{c}{$H(11;5)$}
\end{tabular}
\end{center}
} 

In a manner similar to what was done in Corollary \ref{12m:5} we will use strippable Heffter arrays $H(4m+1;3)$ and (3;3)-filler arrays to obtain $H(12m+3;5)$ for every $m \geq 1$.  We will then construct $H(12m+3;k)$ with $k \equiv 1$ (mod 4) for all $5\leq k <12m+3$ via Lemma \ref{add4}.   We do this in the following two theorems.

\begin{theorem} \label{12m+3;5}
There exists a $H(12m+3;5)$ for all $m\geq 1$.
\end{theorem}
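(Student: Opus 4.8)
The plan is to imitate the proof of Corollary~\ref{12m:5} almost verbatim, changing only the order of the base array from $4m$ to $4m+1$. In Corollary~\ref{12m:5} a strippable $H(4m;3)$ (from Corollary~\ref{strip-4m,3}) was inflated by $(3;3)$-filler arrays through Lemma~\ref{strip-lemma}; here I would instead start from a strippable $H(4m+1;3)$, which is exactly what Corollary~\ref{4n+1.strip} supplies for every $m\geq 1$.

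Concretely, I would apply Lemma~\ref{strip-lemma} with $n=4m+1$. The hypothesis ``strippable $H(n;2k+1)$'' is then met with $2k+1=3$, i.e.\ $k=1$, by Corollary~\ref{4n+1.strip}. Since $m\geq 1$ forces $4m+1\geq 5\geq 2$, Lemma~\ref{(3,3)-filler)} provides a set of $4m+1$ $(3;3)$-filler arrays, so $u=v=3$. Lemma~\ref{strip-lemma} then returns an $H(nu;v+2k)$, and substituting gives $nu=3(4m+1)=12m+3$ and $v+2k=3+2=5$; that is, precisely the desired $H(12m+3;5)$.

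Because both ingredients are delivered ready-made by earlier results, there is essentially no new computation to perform, and I expect no genuine obstacle beyond the parameter bookkeeping above. The only points I would explicitly confirm are that $4m+1$ is odd and at least $2$ (guaranteeing both that the base array has $2k+1=3$ filled cells per line, namely a primary transversal together with one positive and one negative residual entry per row and column after stripping, and that the filler lemma applies), and that the supports combine to the full interval $\{1,\dots,5(12m+3)\}$; the latter is handled internally by Lemma~\ref{strip-lemma}.

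Finally, looking ahead to the companion theorem that raises $k$ through Lemma~\ref{add4}, I would record the diagonal structure of the output. As in Theorem~\ref{12m;k}, the base $H(4m+1;3)$ is cyclically tridiagonal, its primary-transversal (main-diagonal) cells are replaced by full $3\times3$ filler blocks and its two off-diagonal bands by purely diagonal $D$-blocks; hence all filled cells of the resulting $H(12m+3;5)$ lie in the seven consecutive diagonals centered on the main diagonal, leaving $12m-4$ consecutive empty diagonals available for subsequent diagonal-filling.
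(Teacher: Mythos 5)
Your proposal is correct and follows exactly the paper's own proof: apply Lemma~\ref{strip-lemma} to the strippable $H(4m+1;3)$ of Corollary~\ref{4n+1.strip} together with the $4m+1$ $(3;3)$-filler arrays of Lemma~\ref{(3,3)-filler)}, giving $H(3(4m+1);3+2)=H(12m+3;5)$. The closing remark about the seven filled diagonals is likewise what the paper uses in the follow-up theorem.
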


\begin{proof}From Corollary \ref{4n+1.strip} there exists a strippable Heffter array $H(4m+1;3)$ for all $m \geq 1$.
 By Lemma  \ref{(3,3)-filler)} there is a set of $4m+1$ (3;3)-filler arrays.  The result now follows from Lemma \ref{strip-lemma}.
\end{proof}

\begin{theorem} There exists an $H(12m+3;k)$ for every $m \geq 1$ and every  
$k \equiv 1$ (mod 4) with  $5 \leq k < 12m+3$ .\end{theorem}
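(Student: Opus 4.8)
The plan is to mimic exactly the argument used for Theorem~\ref{12m;k}, treating the $H(12m+3;5)$ of Theorem~\ref{12m+3;5} as a base case and then repeatedly applying Lemma~\ref{add4} to add four filled cells per row and column at a time. First I would recall how that base array was built: Theorem~\ref{12m+3;5} obtains the $H(12m+3;5)$ via Lemma~\ref{strip-lemma}, starting from a strippable $H(4m+1;3)$ (Corollary~\ref{4n+1.strip}) together with a set of $(3;3)$-filler arrays (Lemma~\ref{(3,3)-filler)}). The structural feature I need is that the underlying $H(4m+1;3)$ is cyclically tridiagonal, so its only filled cells lie on the main diagonal (the primary transversal, replaced by the full $3\times 3$ filler blocks $A_t$) and on the two adjacent broken diagonals (replaced by the diagonal $3\times 3$ blocks $D_j$).

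The heart of the proof is a diagonal-counting observation. When each cell of the cyclically tridiagonal $H(4m+1;3)$ is blown up into a $3\times 3$ block, I would check that a full $3\times 3$ block on the main diagonal contributes filled cells on the diagonal offsets $-2,\dots,+2$, while each diagonal block coming from the super- or sub-diagonal contributes filled cells only on offset $+3$ or $-3$, respectively. Hence all filled cells of the resulting $H(12m+3;5)$ lie in exactly seven consecutive (broken) diagonals centered on the main diagonal, precisely as illustrated by the $H(12;5)$ of Example~\ref{H12,5}. This leaves $(12m+3)-7 = 12m-4$ consecutive empty diagonals.

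With that structure in hand the rest is bookkeeping: the $12m-4$ empty consecutive diagonals contain $\lfloor(12m-4)/4\rfloor = 3m-1$ disjoint sets of four consecutive empty diagonals, so Lemma~\ref{add4} yields an $H(12m+3;\,5+4s)$ for every $s=0,1,\dots,3m-1$. These give exactly the values $k=5,9,13,\dots,12m+1$. Since $12m+3\equiv 3\pmod 4$, the largest $k\equiv 1\pmod 4$ with $k<12m+3$ is $12m+1$, so this sequence covers every admissible $k$, completing the proof.

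The step I expect to require the most care is the diagonal-offset count, i.e.\ confirming that the blow-up of a cyclically tridiagonal array fills precisely seven consecutive diagonals and no others (in particular that the wrap-around cells $(1,n)$ and $(n,1)$ behave cyclically, landing on offsets $-3$ and $+3$ modulo $12m+3$, and so do not intrude on the block of empty diagonals). Everything else is a direct appeal to the previously established Theorem~\ref{12m+3;5} and Lemma~\ref{add4}, and the arithmetic $\lfloor(12m-4)/4\rfloor = 3m-1$ conveniently lands the top value at exactly $12m+1$, so no separate treatment of the largest case is needed (unlike in Theorem~\ref{h16m}).
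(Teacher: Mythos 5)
Your proposal is correct and follows essentially the same route as the paper: build the $H(12m+3;5)$ via Theorem~\ref{12m+3;5}, observe that its filled cells occupy seven consecutive diagonals leaving $12m-4$ consecutive empty ones, and apply Lemma~\ref{add4} repeatedly. Your explicit count $\lfloor(12m-4)/4\rfloor=3m-1$, landing exactly on $k=12m+1$, is a worthwhile detail the paper leaves implicit.
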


\begin {proof} Use Theorem \ref{12m+3;5} to make $H(12m+3;5)$ for all $m\geq 1$.    
As in Theorem \ref{12m;k} we see that the filled cells all occur in 7 consecutive diagonals.  This leaves $12m+3-7=12m-4$ consecutive empty diagonals in the resulting array.  Now use Lemma \ref{add4} to construct the desired $H(12m+3;k)$. \end{proof}

Unfortunately we can not present any general results in the two remaining subcases of $H(n;k)$ with $k\equiv 1$ (mod 4), namely when $n \equiv 7$ (mod 12) (except $n=7$) or when  $n \equiv 11$ (mod 12).

\section {Conclusion} \label{section5}

We have constructed square integer Heffter arrays $H(n;k)$ for many of the possible orders. 
Below is a table showing all of the cases that have been considered in this paper.  Note that by Theorem
\ref{necessary} an integer $H(n;k)$ does not exist unless $nk \equiv 0,3$ (mod 4). For the cases that can exist we give the theorem number that proves the existence. In the cases we have not solved completely, a subsection is given that contains the partial result.  Note $n$ and $k$ represent congruence classes modulo 4.  We do not hesitate to conjecture that there exists an integer $H(n;k)$ if and only if  $n \geq k\geq 3$ and $nk \equiv 0,3$ (mod 4).

\renewcommand{\tabcolsep}{4pt}

\begin{center}
\begin{tabular} {|c||c|c|c|c|} \hline 
$n\backslash k$&0&1&2&3\\ \hline\hline
0&Corollary \ref{4k}&   

\begin{tabular}{c}
 $ 4  \lceil{(n-4)/12}\rceil +5 \leq k <n$\\
 $n \equiv 0$ (mod 12) and $5 \leq k < n$ , or \\
$n \equiv 0$ (mod 16)  and   $5 \leq k < n  $\\
$n \equiv 4$ (mod 16)  and   $5 \leq k < n  $\\
see Subsection \ref{subsection4.1}
\end{tabular}

&Theorem \ref{th1}&Theorem \ref{ladder.4m} \\ \hline
1&Corollary \ref{4k}&DNE&DNE&Theorem \ref{ladder.4m+1} \\ \hline
2&Corollary \ref{4k}&DNE&Theorem \ref{th1}&DNE \\ \hline
3&Corollary \ref{4k}  &

\begin{tabular}{c}

$n \equiv 3$ (mod 12) and $5 \leq k < n$ \\
see Subsection \ref{subsection4.2}
\end{tabular}

&DNE&DNE \\ \hline
\end{tabular}
\end{center}

\section{Acknowledgement} Much of this research was done at the University of Queensland while the second  and fourth authors were visiting  (at separate times).  We thank  the Ethel Raybould Visiting Fellowship and the Turkish Science Foundation 2219 Program for providing funds for these visits.



\begin{thebibliography}{50}

\bibitem{anderson} I. Anderson, Triples, current graphs and biembeddings, {\em Aeq. Math.} {\bf 54} (1982), 230 $-$ 242.


\bibitem{heffter1}D.S.  Archdeacon, Heffter arrays and biembedding graphs on surfaces, (in preparation)

\bibitem{heffter2}  D.S.  Archdeacon, J.H. Dinitz, T. Boothby,  Tight Heffter arrays exist for all possible orders, (in preparation).

\bibitem{heffter4}D.S. Archdeacon, J.H. Dinitz, and D.R. Stinson, On partial sums in cyclic groups, (in preparaton).

\bibitem{handbook-cycle} D. Bryant and C. Rodger, Cycle decompositions, in {\em Handbook of Combinatorial Designs (2nd ed)}, (C. J. Colbourn
and J. H. Dinitz, eds.) Chapman \& Hall/CRC, Boca Raton, Fl. (2007), 373 $-$ 382.

\bibitem{BD} M. Buratti and A. Del Fra, Existence of cyclic $k-$cycle systems of the complete graph, {\em Discrete Math.} {\bf 261} (2003), 113 $-$ 125.


\bibitem{DGLM} D.M. Donovan, T.S. Griggs, G. J. Lefevre and T. A. McCourt,  Cyclic biembeddings of twofold triple systems, {\em Annals of Combinatorics} {\bf 16} (2014), 57 $-$ 74.


\bibitem{DGLM2}D.M. Donovan, T.S. Griggs, J.G. Lefevre and T.A. McCourt, Further biembeddings of twofold triple systems, {\em Ars Mathematica Contemporanea}, to appear.
    
\bibitem{GG} M.J. Grannell and T.S. Griggs. Designs and topology. In Surveys in Combinatorics 2007, A. Hilton and J. Talbot, eds.  London Mathematical Society Lecture Note Series, 346. Cambridge, UK: Cambridge University Press, (2007), 121 $-$ 174.

\bibitem{GA} J.L. Gross and S.R. Alpert, The topological theory of current graphs, {\em J. Combin. Th. (Ser. B)} {\bf 17} (1974), 218 $-$ 233.

\bibitem{H} L. Heffter, \"{U}ber das Problem der Nachbargebiete, {\em Math. Ann.} {\bf 38} (1891), 477 $-$ 508.


\bibitem {R} G. Ringel, Map Color Theorem, Spring-Verlag Press Berlin, 1974.



\bibitem{V}   A. Vietri, Cyclic $k-$cycle systems of order $2kn+k$: a solution of the last open cases. {\em J. Combin. Des.} {\bf 12} (2004),  299 $-$ 310. 


 \bibitem{Y} J.W.T. Youngs, The mystery of the Heawood conjecture. in Graph Theory and its Applications, Academic Press, New York, (1970),  17 $-$ 50.



\end{thebibliography}
\end{document}